\newcommand{\Z}{\mathbb{Z}}
\newcommand{\N}{\mathbb{N}}
\newcommand{\R}{\mathbb{R}}
\newcommand{\K}{\mathbb{K}}
\newcommand{\F}{\mathbb{F}}
\newcommand{\SL}{\mathsf{SL}}
\newcommand{\GL}{\mathsf{GL}}
\newcommand{\diag}{\operatorname{diag}}
\newcommand{\Ann}{\operatorname{Ann}}
\newcommand{\mC}{\mathcal{C}}
\newcommand{\mY}{\mathcal{Y}}
\newcommand{\mF}{\mathcal{F}}
\newcommand{\tY}{\widetilde{\mathcal{Y}}}
\newcommand{\mA}{\mathcal{A}}
\newcommand{\mZ}{\mathcal{Z}}
\newcommand{\ba}{\boldsymbol{a}}
\newcommand{\bb}{\boldsymbol{b}}
\newcommand{\bh}{\boldsymbol{h}}
\newcommand{\bz}{\boldsymbol{z}}
\newcommand{\bzer}{\boldsymbol{0}}
\newcommand{\gen}[1]{\langle {#1} \rangle}
\newcommand{\oX}{\mkern 1.5mu\overline{\mkern-1.5mu X \mkern-1.5mu}\mkern 1.5mu}
\newcommand{\ox}{\overline{x}}
\newcommand{\oA}{\overline{A}}
\newcommand{\frp}{\mathfrak{p}}
\newtheorem{thrm}{Theorem}[section]
\newtheorem{lem}[thrm]{Lemma}
\newtheorem{prop}[thrm]{Proposition}
\newtheorem{cor}[thrm]{Corollary}
\theoremstyle{definition}
\newtheorem{defn}[thrm]{Definition}
\theoremstyle{definition}
\newtheorem{exmpl}[thrm]{Example}
\theoremstyle{definition}
\newtheorem{rmk}[thrm]{Remark}
\title{Submonoid Membership in n-dimensional lamplighter groups and S-unit equations}
\author{Ruiwen Dong}{Department of Mathematics, Saarland University, Germany \and Magdalen College, University of Oxford, United Kingdom}{ruiwen.dong@magd.ox.ac.uk}{https://orcid.org/0009-0007-4349-082X}{}
\authorrunning{ }
\keywords{Submonoid Membership, lamplighter groups, S-unit equations, $p$-automatic sets, Knapsack in groups} %TODO mandatory; please add comma-separated list of keywords
\begin{document}

\maketitle
\begin{abstract}
We show that Submonoid Membership is decidable in $n$-dimensional lamplighter groups $(\mathbb{Z}/p\mathbb{Z}) \wr \mathbb{Z}^n$ for any prime $p$ and integer $n$. 
More generally, we show decidability of Submonoid Membership in semidirect products of the form $\mathcal{Y} \rtimes \mathbb{Z}^n$, where $\mathcal{Y}$ is any finitely presented module over the Laurent polynomial ring $\mathbb{F}_p[X_1^{\pm}, \ldots, X_n^{\pm}]$.
Combined with a result of Shafrir (2024), this gives the first example of a group $G$ and a finite index subgroup $\widetilde{G} \leq G$, such that Submonoid Membership is decidable in $\widetilde{G}$ but undecidable in $G$.

To obtain our decidability result, we reduce Submonoid Membership in $\mathcal{Y} \rtimes \mathbb{Z}^n$ to solving S-unit equations over $\mathbb{F}_p[X_1^{\pm}, \ldots, X_n^{\pm}]$-modules. We show that the solution set of such equations is effectively $p$-automatic, extending a result of Adamczewski and Bell (2012).
As an intermediate result, we also obtain that the solution set of the Knapsack Problem in $\mathcal{Y} \rtimes \mathbb{Z}^n$ is effectively $p$-automatic.
\end{abstract}

\section{Introduction}
\paragraph*{Algorithmic problems in infinite groups}
Algorithmic problems concerning groups are a classical topic in algebra and theoretical computer science.
Dating back to the work of Max Dehn from the 1910s, this area has traditionally served as a bridge between geometry, algebra and logic, and has now found numerous applications in automata theory, program analysis and complexity theory~\cite{beals1993vegas, blondel2005decidable, derksen2005quantum, hrushovski2018polynomial}.
One of the central decision problems on groups is the \emph{Submonoid Membership} problem: given a finite number of elements $g_1, g_2, \ldots, g_{k}$ in a group $G$ and an element $g \in G$, does $g$ belong to the submonoid generated by $g_1, g_2, \ldots, g_{k}$?
In the seminal work of Markov from the 1950s~\cite{markov1951certain}, it was shown that Submonoid Membership is undecidable for a matrix group of dimension six.
The undecidability result was later strengthened to the group $\SL(4, \Z)$ of $4 \times 4$ integer matrices of determinant one~\cite{mikhailova1958occurrence}.
Subsequently, a number of positive decidability results were obtained, such as for commutative matrix groups~\cite{babai1996multiplicative}, low-dimensional matrix groups~\cite{DBLP:conf/icalp/ColcombetOS019, potapov2017decidability}, and certain graph groups~\cite{lohrey2008submonoid}.
See~\cite{dong2023recent, lohrey2024membership} for recent surveys.

A generalization of Submonoid Membership is the \emph{Rational Subset Membership} problem: given a rational subset $S$ of a group $G$ and an element $g \in G$, does $g$ belong to $S$?
Here, a subset $S$ of a group $G$ is called a \emph{rational subset} if there is a finite alphabet $\Sigma$, a monoid homomorphism $\varphi \colon \Sigma^* \rightarrow G$, and a regular language $L \subseteq \Sigma^*$, such that $\varphi(L) = S$.
In particular, if $L = \Sigma^*$, then we recover the definition of Submonoid Membership.
In fact, many decidability results for Submonoid Membership are also proven for the more general Rational Subset Membership, such as in free groups~\cite{benois1969parties}, abelian groups~\cite{grunschlag1999algorithms}, and certain wreath products~\cite{lohrey2015rational}.
Nevertheless, Bodart~\cite{bodart2024membership} and Shafrir\footnote{An example appeared in the Bachelor's thesis of Potthast~\cite{potthast2020submonoid}, based on an unpublished draft of Shafrir.} recently provided examples of groups with undecidable Rational Subset Membership but decidable Submonoid Membership.

A longstanding open problem in computational group theory is whether decidability of Submonoid Membership is preserved under finite extension of groups.
Grunschlag~\cite{grunschlag1999algorithms} showed that for any group $G$ and a finite index subgroup $\widetilde{G} \leq G$, \emph{Rational Subset Membership} is decidable in $\widetilde{G}$ if and only if it is decidable in $G$.
The same statement is also true for the \emph{Subgroup Membership} problem~\cite{grunschlag1999algorithms} (given $g_1, g_2, \ldots, g_{k}, g$, does $g$ belong to the subgroup generated by $g_1, g_2, \ldots, g_{k}$?).
Whether the same holds for \emph{Submonoid Membership} remained open.
One of the goals of this paper is to construct a counterexample to this open problem: we construct a group $G$ and a finite index subgroup $\widetilde{G} \leq G$, such that Submonoid Membership is decidable in $\widetilde{G}$ but undecidable in $G$.

\paragraph*{n-dimensional lamplighter groups}

In this paper, we study Submonoid Membership in \emph{$n$-dimensional lamplighter groups} as well as their generalizations.
The lamplighter groups are well-studied objects in the context of geometric group theory~\cite{grigorchuk2001lamplighter, silva2005class}.
Given an integer $p \geq 2$ and $n \in \N$, the $n$-dimensional lamplighter group, denoted $(\Z/p\Z) \wr \Z^n$, can be most easily defined as the (multiplicative) $2 \times 2$ upper-triangular matrix group
\begin{equation}\label{eq:deflamp}
\left\{
\begin{pmatrix}
    X_1^{a_1} X_2^{a_2} \cdots X_n^{a_n} & f \\
    0 & 1\\
\end{pmatrix}
\;\middle|\;
a_1, a_2, \ldots, a_n \in \Z, \; f \in (\Z/p\Z)[X_1, X_1^{-1}, \ldots, X_n, X_n^{-1}]
\right\},
\end{equation}
where $(\Z/p\Z)[X_1, X_1^{-1}, \ldots, X_n, X_n^{-1}]$ is the Laurent polynomial ring over $n$ variables, with coefficients in the finite ring $\Z/p\Z = \{0, 1, \ldots, p-1\}$.
When $p$ is prime, $\Z/p\Z$ is actually the finite field $\F_p$.
Intuitively, an element in $(\Z/p\Z) \wr \Z^n$ can be understood as the following configuration: at each position in the lattice $\Z^n$ there is a \emph{lamp}, each lamp has a \emph{state} among $\Z/p\Z = \{0, 1, \ldots, p-1\}$, and there is a \emph{pointer} at some position in $\Z^n$.
In particular, for the element 
$
\begin{pmatrix}
    X_1^{a_1} \cdots X_n^{a_n} & f \\
    0 & 1\\
\end{pmatrix}
$
, the pointer is at position $(a_1, \ldots, a_n) \in \Z^n$, the lamp at position $(z_1, \ldots, z_n) \in \Z^n$ has state $s$ if the monomial $X_1^{z_1} \cdots X_n^{z_n}$ appearing in $f$ has coefficient $s$. In particular, all but finitely many lamps have state 0.
Multiplication in the group $(\Z/p\Z) \wr \Z^n$, like matrix multiplication, corresponds to first moving the pointer by a vector $(a_1, \ldots, a_n)$, then additively changing the states of the lamps around the new location.

Owing to this geometric interpretation, the study of lamplighter groups has close connections to automata theory, tiling problems, random walks, and graph theory~\cite{barbieri2016group, bartholdi2022simulations, kambites2006spectra, lohrey2011tilings, lyons1996random}.
Lamplighter groups are also important examples of \emph{metabelian groups}, which are some of the most tractable generalization of abelian groups.
Recall that a group $G$ is called \emph{metabelian} if it admits a normal subgroup $A$ such that both $A$ and $G/A$ are abelian.
An important amount of research in computational group theory concentrates on metabelian groups, since decision problems in abelian groups are already well-understood.
For example, Cadilhac, Chistikov and Zetzsche~\cite{DBLP:conf/icalp/CadilhacCZ20} showed decidability of Rational Subset Membership in the \emph{Baumslag-Solitar groups} $\mathsf{BS}(1, p)$; Lohrey, Steinberg and Zetzsche~\cite{lohrey2011tilings} showed decidability of Rational Subset Membership in $1$-dimensional lamplighter groups.
On the other hand, the $2$-dimensional lamplighter groups occupy an interesting position as they have undecidable Rational Subset Membership~\cite{lohrey2011tilings} but decidable Submonoid Membership~\cite{potthast2020submonoid}.
The decision algorithm in~\cite{potthast2020submonoid} cannot be generalized to dimensions larger than 2 as it relies on a reduction to Rational Subset Membership in dimension 1.
It is thus left as an open problem whether Submonoid Membership is decidable in lamplighter groups of dimension $n \geq 3$.

\paragraph*{Contributions of this paper}

In this paper, we show that Submonoid Membership is decidable in lamplighter groups $(\Z/p\Z) \wr \Z^n$ of every dimension $n \in \N$ and \emph{prime} $p$.
In fact, we prove a much more general result (Theorem~\ref{thm:main}): Submonoid Membership is decidable for semidirect products $\mY \rtimes \Z^n$, where $\mY$ is a finitely presented module over the Laurent polynomial ring $\F_p[X_1^{\pm}, \ldots, X_n^{\pm}]$.
%We will see later (Remark~\ref{rmk:noteasy}) that proving this more general result is necessary, even if we only want to obtain decidability for lamplighter groups.
%This decidability result can even extend to all metabelian groups whose commutator is $p$-torsion (Corollary~\ref{cor:meta}).

An important consequence of the more general Theorem~\ref{thm:main} is the resolution of a longstanding open problem (Corollary~\ref{cor:finext}).
Combined with a recent result of Shafrir~\cite{shafrir2024decidability}, we show that there exist a group $G$ and a finite index subgroup $\widetilde{G} \leq G$, such that Submonoid Membership decidable in $\widetilde{G}$ but undecidable in $G$.

Our main strategy to decide Submonoid Membership in $\mathcal{Y} \rtimes \Z^n$ is to reduce it to solving \emph{S-unit equations} over modules: these are equations of the form $x_1 c_1 + x_2 c_2 + \cdots + x_m c_m = c_0$, where the variables $x_i$ take value in a multiplicative subgroup of a commutative ring.
We show that the solutions of such S-unit equations over an $\F_p[X_1^{\pm}, \ldots, X_n^{\pm}]$-module forms an effectively $p$-automatic set (Theorem~\ref{thm:Sunit}), generalizing a result of Adamczewski and Bell~\cite{adamczewski2012vanishing} (see also the independent work of Derksen and Masser~\cite{derksen2012linear} for a similar result).
As an intermediate result, we also obtain effective $p$-automaticity for the solution set of the \emph{Knapsack Problem} in $\mathcal{Y} \rtimes \Z^n$: given $g_1, g_2, \ldots, g_k, g \in \mathcal{Y} \rtimes \Z^n$, find $(z_1, \ldots, z_k) \in \Z^k$ such that $g_1^{z_1} g_2^{z_2} \cdots g_k^{z_k} = g$.
We give an example (Example~\ref{expl:KPtoSunit}) where the solution set is $p$-automatic but not semilinear.
This shows that the situation in semidirect products is much more complicated than in the wreath product of two abelian groups, as seen in the work of Ganardi, K\"{o}nig, Lohrey, and Zetzsche~\cite{DBLP:conf/stacs/GanardiKLZ18}.

Note that our decidability results heavily rely on working over the finite ring $\Z/p\Z$ instead of the infinite ring $\Z$.
Indeed, Submonoid Membership is known to be undecidable~\cite{lohrey2015rational} in $\Z \wr \Z^n$ for $n \geq 1$, and solving S-unit equations over $\Z[X_1^{\pm}]$-modules is also known to be undecidable~\cite{dong2024linear}.
Nevertheless, it is not clear whether the decidability for $(\Z/p\Z) \wr \Z^n$ can be generalized to non-prime numbers $p$, for example when $p = p_1 p_2$ for two primes $p_1 \neq p_2$. This is closely related to deciding emptiness of the intersection of a $p_1$-automatic set and a $p_2$-automatic set, which to the best of our knowledge remains an open problem (however, see~\cite{Hieronymi2022, karimov2024decidability} for recent progress).
Progress towards a solution for non-prime $p$ will also be the first step towards a classification of metabelian groups with decidable Submonoid Membership, namely by tackling those with torsion commutators.

\section{Preliminaries}

\paragraph*{Laurent polynomial ring and modules}

Let $p$ be a prime number, denote by $\F_p$ the finite field of size $p$.
Denote by $\F_p[X_1^{\pm}, \ldots, X_n^{\pm}]$ the Laurent polynomial ring over $\F_p$ with $n$ variables: this is the set of polynomials over the variables $X_1, X_1^{-1}, \ldots, X_n, X_n^{-1}$, with coefficients in $\F_p$, such that $X_i X_i^{-1} = 1$ for all $i$.
When $n$ is fixed, we denote by $\oX$ the tuple of variables $(X_1, \ldots, X_n)$, and $\F_p[X_1^{\pm}, \ldots, X_n^{\pm}]$ is written in short as $\F_p[\oX^{\pm}]$.

For a vector $\ba = (a_1, \ldots, a_n) \in \Z^n$, denote by $\oX^{\ba}$ the monomial $X_1^{a_1} X_2^{a_2} \cdots X_n^{a_n}$.
When $\Lambda$ is a subgroup of $\Z^n$, we denote by $\F_p[\oX^{\Lambda}] \subseteq \F_p[\oX^{\pm}]$ the subring of polynomials of the form $\sum_{\ba \in \Lambda} c_{\ba} \oX^{\ba}$.

Since we work in polynomial rings over a finite field $\F_p$, it is worth pointing out a useful equation, folklorishly dubbed ``the freshman's dream''. For any $f \in \F_p[\oX^{\pm}]$, we have $(f + 1)^p = f^p + 1$, and more generally $(f + 1)^{p^k} = f^{p^k} + 1$ for all $k \in \N$.

When $R$ is a commutative ring (such as $\F_p[\oX^{\pm}]$), an \emph{$R$-module} is defined as an abelian group $(M, +)$ along with an operation $\cdot \;\colon R \times M \rightarrow M$ satisfying $f \cdot (m+m') = f \cdot m + f \cdot m'$, $(f + g) \cdot m = f \cdot m + g \cdot m$, $fg \cdot m = f \cdot (g \cdot m)$ and $1 \cdot m = m$.
For example, for any $d \in \N$, $\F_p[\oX^{\pm}]^d$ is an $\F_p[\oX^{\pm}]$-module by $f \cdot (h_1, \ldots, h_d) = (fh_1, \ldots, fh_d)$.
We often use a bold symbol $\bh$ to denote a vector $(h_1, \ldots, h_d) \in \F_p[\oX^{\pm}]^d$.

Given elements $\bh_1, \ldots, \bh_k$ in an $R$-module $M$, we say that they \emph{generate} the $R$-module
\[ \sum_{i=1}^k R \cdot \bh_i \coloneqq \left\{\sum_{i=1}^k r_i \cdot \bh_i \;\middle|\; r_1, \ldots, r_k \in R \right\} \subseteq M. \]
%A module is called \emph{finitely generated} if it can be generated by a finite number of elements.
Given submodules $M, M'$ of $\F_p[\oX^{\pm}]^d$ such that $M \supseteq M'$, we define the quotient $M/M' \coloneqq \{\overline{m} \mid m \in M\}$ where $\overline{m}_1 = \overline{m}_2$ if and only if $m_1 - m_2 \in M'$.
This quotient is also an $\F_p[\oX^{\pm}]$-module.
We say that an $\F_p[\oX^{\pm}]$-module $\mY$ is \emph{finitely presented} if it can be written as a quotient $\F_p[\oX^{\pm}]^d/M$ for a finitely generated submodule $M$ of $\F_p[\oX^{\pm}]^d$ for some $d \in \N$.
We call a \emph{finite presentation} of $\mY$ the generators of such $M$.
Every finitely generated $\F_p[\oX^{\pm}]$-module admits a finite presentation.

Given a finitely presented $\F_p[X_1^{\pm}, \ldots, X_n^{\pm}]$-module $\mY$, we can define the following group using \emph{semidirect product}:
\begin{equation}\label{eq:defsemi}
\mY \rtimes \Z^n \coloneqq \{(y, \ba) \mid y \in \mY, \ba \in \Z^n\};
\end{equation}
multiplication and inversion in this group are defined by
\begin{equation}\label{eq:defsemi2}
(y, \ba) \cdot (y', \ba') = \big(y + \oX^{\ba} \cdot y', \ba + \ba'\big), \quad (y, \ba)^{-1} = \big(\! - \oX^{-\ba} \cdot y, -\ba\big).
\end{equation}
The neutral element of $\mY \rtimes \Z^n$ is $(0, \bzer)$, where $\bzer$ denotes the zero vector in $\Z^n$.
Intuitively, the element $(y, \ba)$ can be seen as a $2 \times 2$ matrix
$
\begin{pmatrix}
\oX^{\ba} & y \\
0 & 1 \\
\end{pmatrix}
$, where group multiplication is represented by matrix multiplication.
We have the formula $(f, \ba)(y, \bzer)(f, \ba)^{-1} = (\oX^{\ba} \cdot y, \bzer)$.
Note that $\mY \rtimes \Z^n$ naturally contains the subgroup $\mY \cong \{(y, \bzer) \mid y \in \mY\}$, and there is a projection map $\pi \colon \mY \rtimes \Z^n \rightarrow \Z^n, \, (y, \ba) \mapsto \ba$, such that $\ker(\pi) = \mY$.

In particular, if we take $\mY = \F_p[\oX^{\pm}]$ considered as a $\F_p[\oX^{\pm}]$-module, then we recover the definition~\eqref{eq:deflamp} of the $n$-dimensional lamplighter group $(\Z/p\Z) \wr \Z^n$.

\paragraph*{Effective computation in modules}
In order to effectively compute in groups of the form $\mY \rtimes \Z^n$, we need to be able to effectively compute in the $\F_p[\oX^{\pm}]$-module $\mY$.
%Effective computation in finitely presented modules over polynomial rings is a well-studied area, with numerous algorithms developed to solve a wide range of computation problems.
The following are some classic problems with effective algorithms that we will make use of.
We state the following result over $\F_p[\oX^{\pm}]$, although they hold over polynomial rings over any field.

\begin{lem}[{\cite[Lemma~2.1, 2.2]{baumslag1981computable}}]\label{lem:classicdec}
    Let $\mY$ be an $\F_p[\oX^{\pm}]$-module with a given finite presentation.
    The following problems are effectively solvable:
    \begin{enumerate}[(i)]
        \item \textit{(Submodule Membership)} Given elements $y_1, \ldots, y_k, y \in \mY$, decide whether $y$ is in the submodule generated by $y_1, \ldots, y_k$.
        \item \textit{(Submodule Presentation)} Given elements $y_1, \ldots, y_k \in \mY$, compute a finite presentation for the submodule generated by $y_1, \ldots, y_k$.
        \item \textit{(Computing intersection and sum)} Given the generators of a submodule $A \subseteq \mY$ and the generators of a submodule $B \subseteq \mY$, compute the generators for the submodule $A \cap B$ and for the submodule $A + B = \{a + b \mid a \in A, b \in B\}$.
        \item \textit{(Computing quotient)} Given the generators of a submodule $\mY' \subseteq \mY$, compute a finite presentation of the quotient $\mY/\mY'$.
    \end{enumerate}
\end{lem}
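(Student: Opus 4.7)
The plan is to reduce every operation in $\mY = \F_p[\oX^{\pm}]^d/M$ to standard Gr\"obner basis computations in the free module $\F_p[\oX^{\pm}]^d$, and then to lift the usual Gr\"obner theory over $\F_p[\oX]$ to the Laurent setting. Two preliminary reductions make this work. First, the quotient by $M$ can be absorbed into the list of generators: an element $y \in \mY$ lies in the submodule generated by $y_1, \ldots, y_k$ iff a chosen representative of $y$ in $\F_p[\oX^{\pm}]^d$ lies in the submodule generated by representatives of $y_1, \ldots, y_k$ together with the given generators of $M$. So each of the four items reduces to the case of the free module $\F_p[\oX^{\pm}]^d$. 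Second, via the isomorphism $\F_p[\oX^{\pm}] \cong \F_p[X_1, Y_1, \ldots, X_n, Y_n]/(X_i Y_i - 1 : 1 \leq i \leq n)$, a submodule of $\F_p[\oX^{\pm}]^d$ corresponds bijectively to a submodule of $\F_p[X_1, Y_1, \ldots, X_n, Y_n]^d$ containing the elements $(X_i Y_i - 1) e_j$ for every basis vector $e_j$; so everything reduces to effective computation with submodules of a free module over an ordinary polynomial ring.

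Each item then follows from classical Gr\"obner basis techniques over a Noetherian polynomial ring over a field with decidable equality. For (i), I would compute a Gr\"obner basis of $\langle y_1, \ldots, y_k \rangle$ with respect to a fixed term order and reduce $y$; membership is equivalent to the remainder being zero. For (ii), Schreyer's theorem furnishes an explicit finite generating set of the syzygy module of $y_1, \ldots, y_k$, which is precisely a finite presentation of the submodule they generate. For (iii), $A + B$ is generated by the union of generators of $A$ and $B$, while $A \cap B$ is obtained by the standard elimination trick: introduce an auxiliary variable $t$, form $t \cdot A + (1-t) \cdot B$ inside a larger polynomial ring, and eliminate $t$ via a suitable term order. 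For (iv), if $\mY' = \langle y_1, \ldots, y_k \rangle \subseteq \mY$, then $\mY/\mY' \cong \F_p[\oX^{\pm}]^d / M'$, where $M'$ is the submodule generated by the given generators of $M$ together with lifts of the $y_i$; a finite presentation of $M'$ is already produced by (ii).

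The hard part is the termination and correctness of Buchberger's algorithm in the module setting together with the availability of Schreyer's syzygy computation, but both are classical and are carried out for modules over polynomial rings over a field in \cite{baumslag1981computable}. Since $\F_p$ has decidable equality, all required primitives -- normal form reduction, $S$-pair formation, and Buchberger termination -- are effective, so one simply invokes the cited package.
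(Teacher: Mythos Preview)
The paper does not prove this lemma at all; it simply states it and cites \cite{baumslag1981computable}. So there is no ``paper's own proof'' to compare against. Your sketch is a correct and standard way to establish these facts: absorb the relations $M$ into the generator list to reduce to a free module, pass from Laurent polynomials to ordinary polynomials via $\F_p[\oX^{\pm}] \cong \F_p[X_1,Y_1,\ldots,X_n,Y_n]/(X_iY_i-1)$, and then invoke Gr\"obner bases (Buchberger for membership, Schreyer for syzygies, the $t/(1-t)$ elimination trick for intersections). All of these are well known to terminate and be correct over a polynomial ring over a computable field, and $\F_p$ certainly qualifies.

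Two minor remarks. First, your phrasing of the intersection trick is slightly informal: what one forms is the submodule of $R[t]^d$ generated by $t\cdot a$ for $a$ ranging over generators of $A$ and $(1-t)\cdot b$ for $b$ ranging over generators of $B$, and then intersects with $R^d$ by eliminating $t$; you clearly have this in mind. Second, the reference \cite{baumslag1981computable} predates the systematic Gr\"obner viewpoint and argues more in the language of effective Noetherian module theory, so strictly speaking you are giving a different (more modern) justification than the cited source---but mathematically the content is the same, and for the purposes of this paper either route suffices.
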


Let $\Lambda$ be a subgroup of $\Z^n$, then any $\F_p[\oX^{\pm}]$-module can be considered as a (possibly infinitely generated) $\F_p[\oX^{\Lambda}]$-module. For example, the $\F_p[X_1^{\pm}, X_2^{\pm}]$-module $\F_p[X_1^{\pm}, X_2^{\pm}]$ can be considered as an $\F_p[X_1^{\pm}]$-module, but it is not finitely generated (it is generated by the infinite set $\{\ldots, X_2^{-2}, X_2^{-1}, 1, X_2, X_2^2, \ldots\}$).
However, finitely generated $\F_p[\oX^{\Lambda}]$-submodules of an $\F_p[\oX^{\pm}]$-module are still effectively computable:

\begin{lem}[{\cite[Theorem~2.14]{baumslag1981computable} or \cite[Theorem~2.6]{baumslag1994algorithmic}}]\label{lem:sgmod}
    Let $\Lambda$ be a subgroup of $\Z^n$, and $\mY$ be a finitely presented $\F_p[\oX^{\pm}]$-module.
    Given a finite subset $S$ of $\mY$, one can effectively compute the finite presentation of the $\F_p[\oX^{\Lambda}]$-module $\sum_{s \in S} \F_p[\oX^{\Lambda}] \cdot s$.
\end{lem}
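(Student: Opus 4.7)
The plan is to reduce the task to the $\F_p[\oX^{\pm}]$-module computations provided by Lemma~\ref{lem:classicdec}. Writing $\mY = \F_p[\oX^{\pm}]^d/M$ and lifting each $s \in S = \{s_1, \ldots, s_k\}$ to a representative in $\F_p[\oX^{\pm}]^d$, computing a presentation of $\sum_i \F_p[\oX^{\Lambda}] \cdot s_i$ reduces to computing finite $\F_p[\oX^{\Lambda}]$-generators of the module of relations
\[
R = \bigl\{(r_1, \ldots, r_k) \in \F_p[\oX^{\Lambda}]^k : \textstyle\sum_i r_i s_i = 0 \text{ in } \mY\bigr\}.
\]
Applying Lemma~\ref{lem:classicdec}(ii) to $s_1, \ldots, s_k \in \mY$ yields a finite presentation of the $\F_p[\oX^{\pm}]$-submodule they generate, whose relation module is an $\F_p[\oX^{\pm}]$-submodule $\widetilde R \subseteq \F_p[\oX^{\pm}]^k$ presented by finite $\F_p[\oX^{\pm}]$-generators. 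Since $R = \widetilde R \cap \F_p[\oX^{\Lambda}]^k$, the core task becomes: given finite $\F_p[\oX^{\pm}]$-generators of a submodule $\widetilde R \subseteq \F_p[\oX^{\pm}]^k$, compute finite $\F_p[\oX^{\Lambda}]$-generators of $\widetilde R \cap \F_p[\oX^{\Lambda}]^k$.

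To normalize $\Lambda$, I would apply Smith Normal Form: after a computable change of variables on $\F_p[\oX^{\pm}]$, we may assume $\Lambda = d_1\Z \times \cdots \times d_r\Z \times \{0\}^{n-r}$ for positive integers $d_i$ and $r = \mathrm{rank}\,\Lambda$. Let $\Lambda_0 = \Z^r \times \{0\}^{n-r}$, a direct summand of $\Z^n$ containing $\Lambda$ with finite index $q = \prod_i d_i$. Once $\widetilde R \cap \F_p[\oX^{\Lambda_0}]^k$ is computed (the direct-summand case, treated below), the finite-index step is routine: $\F_p[\oX^{\Lambda_0}] = \bigoplus_{j=1}^q \oX^{\bc_j} \F_p[\oX^{\Lambda}]$ is a free $\F_p[\oX^{\Lambda}]$-module of rank $q$, with $\bc_j$ ranging over coset representatives of $\Lambda_0/\Lambda$, so $\widetilde R \cap \F_p[\oX^{\Lambda_0}]^k$ is finitely generated as an $\F_p[\oX^{\Lambda}]$-module, and intersecting it with the direct summand $\F_p[\oX^{\Lambda}]^k \subseteq \F_p[\oX^{\Lambda_0}]^k$ is handled by Lemma~\ref{lem:classicdec}(iii) applied over $\F_p[\oX^{\Lambda}]$, itself a Laurent polynomial ring in $r$ variables.

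It therefore remains to treat the direct-summand case $\Lambda = \Z^r \times \{0\}^{n-r}$, which I would do by induction on $n-r$. The base $r = n$ is immediate since $\F_p[\oX^{\Lambda}] = \F_p[\oX^{\pm}]$. For the step, it suffices to handle $n - r = 1$: with $A := \F_p[\oX^{\Lambda}]$ one has $\F_p[\oX^{\pm}] \cong A[X_n^{\pm}]$, and after clearing $X_n^{-1}$ from the given generators of $\widetilde R$ the problem becomes: given finite $A[X_n]$-generators of a submodule $\widetilde R \subseteq A[X_n]^k$, compute finite $A$-generators of $\widetilde R \cap A^k$. This is a standard module-elimination problem, solvable by computing a Gr\"{o}bner basis of $\widetilde R$ with respect to a monomial order eliminating $X_n$ and extracting the basis elements that lie in $A^k$.

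The hard part is making this elimination effective over the coefficient ring $A$, which is itself a Laurent polynomial ring in $r < n$ variables rather than a field. Buchberger-style completion in $A[X_n]^k$ requires effective arithmetic of ideals, syzygies, and intersections in $A$; these are supplied by Lemma~\ref{lem:classicdec} applied to $A$ in a strictly smaller number of variables, closing the induction. The delicate termination argument and explicit algorithm for this single-variable Laurent polynomial extension are precisely the content of Theorem~2.14 in~\cite{baumslag1981computable}.
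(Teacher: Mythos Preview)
The paper does not give its own proof of this lemma: it is stated with a citation to \cite{baumslag1981computable} and \cite{baumslag1994algorithmic} and then used as a black box. So there is no ``paper's proof'' to compare against.

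Your sketch is a reasonable outline of how the cited result is established, and you correctly identify that the technical heart---effective elimination over a Laurent polynomial base ring---is exactly what Theorem~2.14 of \cite{baumslag1981computable} provides. The reduction to computing $R = \widetilde{R} \cap \F_p[\oX^{\Lambda}]^k$, the Smith normal form step, and the finite-index descent are all sound. One small point to tighten: in the direct-summand step you write ``after clearing $X_n^{-1}$ from the given generators of $\widetilde{R}$ the problem becomes\ldots'', but multiplying generators by powers of $X_n$ changes the $A[X_n]$-module they generate, so one has to be slightly more careful about how $\widetilde{R} \cap A^k$ relates to the resulting polynomial submodule (e.g.\ work in $A[X_n,Y]/(X_nY-1)$, or take a saturation). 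This is routine and is handled in the cited references, so it does not affect the validity of your overall plan.
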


\paragraph*{p-automatic sets}
We recall the standard notion of \emph{$p$-automatic subsets} of $\Z$, and more generally, $p$-automatic subsets of $\Z^d$.

Let $\Sigma$ be a finite alphabet. An automaton over $\Sigma$ is a tuple $\mA = (Q, \Sigma, \delta, q_I, F)$,
where $Q$ is a finite set of states, $\delta \colon Q \times \Sigma \rightarrow Q$ is the transition function, $q_I$ is the initial state, and $F \subseteq Q$ is the set of final states.
For a state $q$ in $Q$ and for a finite word $w = w_1 w_2 \cdots w_n$ over the alphabet $\Sigma$, we define $\delta(q, w)$ recursively by $\delta(q, w) = \delta(\delta(q, w_1 w_2 \cdots w_{n-1}), w_n)$. 
The word $w$ is \emph{accepted} by $\mA$ if $\delta(q_I, w) \in F$.

Let $p \geq 2$ be an integer, define the alphabet $\Sigma_p \coloneqq \{+, -, 0, 1, 2, \ldots, p-1\}$.
Let $n \geq 0$ be a non-negative integer and let $w_r w_{r-1} \cdots w_1 w_0 \in (\Sigma_p)^{r+1}$ be the base-$p$ expansion of $n$. 
That is, $n = \sum_{i = 0}^r w_i p^i$ with the smallest possible $r$. 
We denote by $w(n)$ the word $+ w_0 w_1 \cdots w_r$.
For example, if $p = 2$, then $w(4) = +001$.
Similarly, for a negative integer $-n$, let $w(-n)$ denote the word $- w_0 w_1 \cdots w_r$. 
A subset $S$ of $\Z$ is called \emph{$p$-automatic} if there is an automaton over $\Sigma_p$ that accepts exactly the set $w(S) \coloneqq \{w(s) \mid s \in S\}$.
For example, the set $S = \{2^k \mid k \in \N\}$ is 2-automatic, because $w(S) = \{+1, +01, +001, +0001, \cdots \}$ is accepted by an automaton over $\Sigma_2 = \{+, -, 0, 1\}$.

Let $d \in \N$. The definition of $p$-automatic subsets of $\Z$ can be naturally generalized to $p$-automatic subsets of $\Z^d$.
For $(z_1, \ldots, z_d) \in \Z^d$, consider the tuple of words $(w(z_1), \ldots, w(z_d))$ over the alphabet $\Sigma_p$.
We naturally identify a $d$-tuple of words $(w(z_1), \ldots, w(z_d))$ over the alphabet $\Sigma_p$ as a single word $w(z_1, \ldots, z_d)$ over the alphabet $\Sigma_p^d$: if the words $w(z_1), \ldots, w(z_d)$ have different lengths, we add a minimal number of zeros at the end so they all have equal lengths.
%For example, let $p = 2$ and consider the 3-tuple $(3, -10, 4) \in \Z^3$. Then $(w(3), w(-10), w(4)) = (+11, -0101, +001)$ is a 3-tuple of words over $\Sigma_2$. We add zeros at the end until they have the same length, obtaining $(+1100, -0101, +0010)$, this can then be considered as a single word $w(3, -10, 4) \coloneqq (+, -, +)(1, 0, 0)(1, 1, 0)(0, 0, 1)(0, 1, 0)$ over the alphabet $\Sigma_2^3$.
Naturally, a subset $S$ of $\Z^d$ is called \emph{$p$-automatic} if there is an automaton over $\Sigma_p^d$ that accepts exactly the set $w(S) \coloneqq \{w(s) \mid s \in S\}$.
For example, the set $S = \{(a, 2a) \mid a < -1\}$ $\subset \Z^2$ is 2-automatic, because 
\[
w(S) = \{(-, -)(a_1, 0)(a_2, a_1)(a_3, a_2) \cdots (a_k, a_{k-1}) (1, a_k) (0, 1) \mid k \in \N, a_1, \ldots, a_k \in \{0, 1\}\}
\]
is accepted by an automaton over $\Sigma_2^2$.
We say a subset $S \subseteq \Z^d$ is \emph{effectively} $p$-automatic if its accepting automaton is explicitly given.
$p$-automatic sets enjoy various closure properties.
In this paper, we will use the following:

\begin{lem}[{\cite{wolper2000construction}}]\label{lem:pauto}
    Let $p \geq 2$ be an integer.
    \begin{enumerate}[(1)]
        \item If $S$ and $T$ are effectively $p$-automatic subsets of $\Z^d$, then $S \cap T$ and $S \cup T$ are also effectively $p$-automatic.
        \item If $S \subseteq \Z^d$ is effectively $p$-automatic, and $\varphi \colon \Z^d \rightarrow \Z^n$ is a linear transformation, then $\varphi(S) \subseteq \Z^n$ is also effectively $p$-automatic.
        \item If $S \subseteq \Z^d$ is effectively $p$-automatic, and $\phi \colon \Z^N \rightarrow \Z^d$ is a linear transformation, then $\phi^{-1}(S) \coloneqq \{v \in \Z^N \mid \phi(v) \in S\}$ is also effectively $p$-automatic.
        \item It is decidable whether a given effectively $p$-automatic set $S$ is empty.
    \end{enumerate}
\end{lem}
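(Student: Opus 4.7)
The plan is to prove each of the four closure properties separately using standard automata-theoretic constructions, most of which are already worked out in~\cite{wolper2000construction}. Parts (1) and (4) are essentially immediate. For (1), the classical product construction applies: given automata $\mA = (Q, \Sigma_p^d, \delta, q_I, F)$ and $\mA' = (Q', \Sigma_p^d, \delta', q_I', F')$ accepting $w(S)$ and $w(T)$, the product automaton on $Q \times Q'$ with componentwise transitions accepts $w(S \cap T)$ with final states $F \times F'$ and $w(S \cup T)$ with final states $(F \times Q') \cup (Q \times F')$. For (4), emptiness of the set is equivalent to emptiness of the accepted language, which reduces to graph reachability from $q_I$ to any state in $F$ in the transition graph of the automaton.

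Parts (2) and (3) both follow from two building blocks, combined with closure under projection. The first building block is that the graph of addition, $\{(x, y, z) \in \Z^3 \mid x + y = z\}$, is effectively $p$-automatic: an automaton over $\Sigma_p^3$ reads the three base-$p$ expansions in parallel (least significant digit first) and maintains a bounded carry in its state, with the sign markers $+, -$ handled by a few initial states that branch into the appropriate addition-or-subtraction mode. The second building block is that for every fixed $c \in \Z$, the graph $\{(x, y) \in \Z^2 \mid y = cx\}$ is effectively $p$-automatic, via a similar carry-tracking transducer whose carry is bounded in terms of $|c|$. Combining these with closure under intersection from part~(1), the graph of any integer linear map $\psi \colon \Z^k \to \Z^m$ becomes an effectively $p$-automatic subset of $\Z^{k+m}$.

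With this in hand, part (2) is obtained by intersecting the $p$-automatic set $S \times \Z^n \subseteq \Z^{d+n}$ with the $p$-automatic graph of $\varphi$ and then projecting onto the last $n$ coordinates; part (3) is symmetric, intersecting $\Z^N \times S$ with the graph of $\phi$ and projecting onto the first $N$ coordinates. Effective closure under projection, i.e.\ dropping a coordinate (equivalently, existentially quantifying one digit-sequence), is itself standard: build a nondeterministic automaton that guesses the erased digits, then determinise.

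The main (minor) obstacle is the careful treatment of the sign symbols $\{+, -\}$ and the end-of-word zero padding fixed in the Preliminaries. Each transducer must initialise its carry correctly after the sign symbols and must correctly handle the case where one coordinate's encoding has ``finished'' (and is implicitly padded with zeros) while another continues; different coordinates of a single vector may also carry opposite signs. This is routine bookkeeping that would occupy most of the technical writing but contains no conceptual difficulty, so I would defer the details to~\cite{wolper2000construction}.
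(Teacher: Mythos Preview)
Your sketch is correct and follows the standard route (closure of $p$-recognisable sets under Boolean operations, projection, and Presburger-definable relations such as addition and scalar multiplication), which is exactly what the cited reference establishes. The paper itself gives no proof of this lemma---it is simply imported from~\cite{wolper2000construction}---so there is nothing further to compare against.
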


\section{Main results and consequences}\label{sec:mainres}

The main result of this paper is the following.

\begin{restatable}{thrm}{thmmain}\label{thm:main}
    Let $p$ be a prime number, $\oX = (X_1, \ldots, X_n)$ be a tuple of variables and $\mY$ be a finitely presented $\F_p[\oX^{\pm}]$-module.
    Then, Submonoid Membership is decidable in the group $\mY \rtimes \Z^n$.
\end{restatable}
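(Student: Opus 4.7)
My plan is to reduce Submonoid Membership in $\mY \rtimes \Z^n$ to checking emptiness of an effectively $p$-automatic set, by combining the paper's two announced tools: effective $p$-automaticity of the Knapsack Problem in $\mY \rtimes \Z^n$ and of S-unit equation solutions over $\F_p[\oX^{\pm}]$-modules (Theorem~\ref{thm:Sunit}).

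Write each generator as $g_i = (y_i, \ba_i)$ and the target as $g = (y_0, \ba_0)$. Multiplying out via~\eqref{eq:defsemi2}, any product $g_{i_1} g_{i_2} \cdots g_{i_m}$ equals $\bigl(\sum_{j=1}^m \oX^{s_j}\cdot y_{i_j},\ \sum_{j=1}^m \ba_{i_j}\bigr)$ with prefix pointer sum $s_j = \ba_{i_1} + \cdots + \ba_{i_{j-1}}$. The pointer equation $\sum_j \ba_{i_j} = \ba_0$ depends only on the Parikh image $(z_1, \ldots, z_k) \in \Zp^k$, whereas the module equation becomes $\sum_{i=1}^k f_i \cdot y_i = y_0$ in $\mY$, where $f_i := \sum_{j:\, i_j = i} \oX^{s_j} \in \F_p[\oX^{\pm}]$ records the multiset of prefix sums at which $g_i$ occurs. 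The task thus splits into (a) describing which tuples $(f_1, \ldots, f_k)$ are realizable by some word with a legal Parikh image, and (b) selecting those satisfying the module equation.

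For~(b), the identity $\sum_i f_i y_i = y_0$ with each $f_i$ a sum of monomials $\oX^{s}$ is an S-unit equation in the $\F_p[\oX^{\pm}]$-module $\mY$, where the variables range over the multiplicative group of monomials; by Theorem~\ref{thm:Sunit} its solution set, indexed by the exponent vectors $s \in \Z^n$, is effectively $p$-automatic. For~(a), I would combine effective $p$-automaticity of the Knapsack solution set in $\mY \rtimes \Z^n$ (announced in the paper) with a direct analysis of prefix-sum multisets over the abelian group $\Z^n$, in order to obtain an effectively $p$-automatic description of realizable tuples of exponent supports. Intersecting these two $p$-automatic sets and projecting to the desired coordinates via Lemma~\ref{lem:pauto}(1), (2), (4) then reduces Submonoid Membership to a $p$-automatic emptiness test, which is decidable.

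\textbf{Main obstacle.} Step~(a) is the delicate one. The prefix sums $s_j$ are not independent variables: they are dictated by the linear order of the word, a priori yielding a complicated shuffle-style constraint. To bring this inside the $p$-automatic framework, I expect to exploit two structural features of our setting. First, the abelianness of $\Z^n$ makes the multiset of prefix sums largely insensitive to word order up to elementary commutation moves, so that a bounded amount of ordering data on top of the Parikh image should suffice to determine it. Second, the freshman's dream $(f+1)^{p^k} = f^{p^k}+1$ causes large repeated-monomial sums to collapse in characteristic $p$, drastically shrinking the set of distinct realizable $f_i$'s; repeated occurrences of a given prefix sum should thus be absorbable into $p^k$-th power operations on the S-unit variables. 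Handling generators with $\ba_i = \bzer$ (which may be inserted freely, with module contributions simply summing) separately from those with $\ba_i \neq \bzer$ will likely also be required.
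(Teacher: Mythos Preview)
Your approach has a genuine gap in step~(b), and step~(a) is not so much ``delicate'' as it is the entire difficulty, which you have not begun to address.

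Regarding~(b): Theorem~\ref{thm:Sunit} treats equations $\oX^{\bz_1} c_1 + \cdots + \oX^{\bz_m} c_m = c_0$ with a \emph{fixed} number $m$ of monomial terms. In your decomposition $\sum_i f_i y_i = y_0$, the polynomials $f_i$ each have as many monomials as the number of times $g_i$ occurs in the word, and the total number of monomials is the word length --- which is unbounded. So this is not an S-unit equation in the sense of Theorem~\ref{thm:Sunit}; the theorem gives you nothing here. Your remark about ``freshman's dream'' collapsing repeated monomials does not fix this: distinct prefix sums give genuinely distinct monomials, and there is no a priori bound on how many distinct prefix sums occur.

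The paper's proof resolves exactly this unboundedness before ever invoking S-unit equations. The key step is Proposition~\ref{prop:SMtoGP}: a cone argument on the projections $\pi(g_i) \in \Z^n$ shows that generators whose projection lies in the maximal linear subspace $\mC_0$ of the cone $\mC$ actually generate a \emph{group} $H$ (using that $\ker(\pi) = \mY$ is torsion), while the remaining generators can appear only boundedly many times in any product equal to $g$. This reduces Submonoid Membership to membership in a product $\leftindex^{q_1}{H} \cdots \leftindex^{q_k}{H}$ of \emph{boundedly many} conjugates of a fixed subgroup. Proposition~\ref{prop:GPtoKP} then converts this to a Knapsack instance $h_1^{z_1} \cdots h_m^{z_m} = g'$ with a fixed number $m$ of factors, and only now does Theorem~\ref{thm:KP} (hence Theorem~\ref{thm:Sunit}) apply. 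Without the cone/torsion argument of Proposition~\ref{prop:SMtoGP}, you have no mechanism to bound the number of S-unit variables, and the reduction you sketch does not go through.
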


In particular, taking $\mY = \F_p[\oX^{\pm}]$, we immediately obtain decidability of Submonoid Membership in the $n$-dimensional lamplighter groups $(\Z/p\Z) \wr \Z^n$ for prime $p$. 
%In fact, Theorem~\ref{thm:main} gives decidability for a more general class of metabelian groups:
%\begin{cor}\label{cor:meta}
%    Let $p$ be a prime number and $G$ be a finitely generated metabelian group such that $g^p = e$ for all $g \in [G, G]$. Then Submonoid Membership is decidable in $G$.
%\end{cor}

%An important consequence of Theorem~\ref{thm:main} is as follows.
We say a group is \emph{virtually abelian} if it has a finite index subgroup that is abelian.
A recent result of Shafrir~\cite[Theorem~8.1]{shafrir2024decidability} shows that membership problem in a \emph{fixed} rational subset $S$ of a group $L$ can be reduced to the membership problem in a fixed submonoid $M$ of the group $L \times H$, where $H$ is an explicitly constructed finitely generated virtually abelian group.
Combined with Theorem~\ref{thm:main}, this allows us to construct an example where decidability of Submonoid Membership is not preserved under finite extension of groups:
\begin{cor}\label{cor:finext}
    There exist a group $G$ and a finite index subgroup $\widetilde{G} \leq G$, such that Submonoid Membership is decidable in $\widetilde{G}$ but undecidable in $G$.
\end{cor}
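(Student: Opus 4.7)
My strategy is to combine Theorem~\ref{thm:main} with Shafrir's recent reduction. I will take $L = (\Z/p\Z) \wr \Z^2$ for a prime $p$, which by \cite{lohrey2011tilings} has undecidable Rational Subset Membership; hence there is a fixed rational subset $S \subseteq L$ whose membership problem is undecidable. Applying \cite[Theorem~8.1]{shafrir2024decidability} to $L$ and $S$ produces a finitely generated virtually abelian group $H$ and a fixed submonoid $M \subseteq L \times H$ such that membership in $S$ reduces to membership in $M$. Setting $G \coloneqq L \times H$, Submonoid Membership is therefore undecidable in $G$.

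To obtain the finite index subgroup, I use that $H$, being finitely generated and virtually abelian, contains a torsion-free finite index abelian subgroup $\widetilde{H} \cong \Z^k$ for some $k \in \N$ (any finitely generated virtually abelian group admits such a subgroup). Define $\widetilde{G} \coloneqq L \times \Z^k$, which has finite index in $G$.

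The final step is to recast $\widetilde{G}$ in the form required by Theorem~\ref{thm:main}. I claim that
\[
\widetilde{G} \;\cong\; \mY \rtimes \Z^{2+k},
\]
where $\mY = \F_p[X_1^{\pm}, X_2^{\pm}]$ is regarded as an $\F_p[X_1^{\pm}, X_2^{\pm}, Y_1^{\pm}, \ldots, Y_k^{\pm}]$-module on which each $Y_i$ acts as the identity. This module is finitely presented, being the quotient of $\F_p[X_1^{\pm}, X_2^{\pm}, Y_1^{\pm}, \ldots, Y_k^{\pm}]$ by the submodule generated by $Y_1 - 1, \ldots, Y_k - 1$. Applying Theorem~\ref{thm:main} to $\mY \rtimes \Z^{2+k}$ then yields decidability of Submonoid Membership in $\widetilde{G}$, completing the construction.

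The main conceptual point, and essentially the only content beyond invoking the cited results, is the observation that a direct factor $\Z^k$ can be absorbed into the exponent lattice of a semidirect product $\mY \rtimes \Z^n$ by letting the additional exponent generators act trivially on $\mY$. I do not anticipate any genuine obstacle: all verifications (that $\widetilde{G}$ has finite index in $G$, that the trivial action gives the correct isomorphism, and that $\mY$ is finitely presented) are straightforward once the reformulation is in place.
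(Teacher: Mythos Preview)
Your proposal is correct and follows essentially the same route as the paper's own proof: both take $G = L \times H$ with $L$ a two-dimensional lamplighter group and $H$ the virtually abelian group supplied by Shafrir's reduction, pass to the finite index subgroup $\widetilde{G} = L \times \Z^k$, and then absorb the $\Z^k$ factor into the exponent lattice by letting the extra variables act trivially on $\mY = \F_p[X_1^{\pm}, X_2^{\pm}]$ so that Theorem~\ref{thm:main} applies. The only cosmetic difference is that the paper fixes $p = 2$ (matching the specific undecidability statement it cites from \cite{lohrey2011tilings}), whereas you leave $p$ as an arbitrary prime.
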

\begin{proof}
    By~\cite[Theorem~10]{lohrey2011tilings}, there is a rational subset $S$ of the two-dimensional lamplighter group $L \coloneqq (\Z/2\Z) \wr \Z^2$, whose membership problem (given $g \in L$, whether $g \in S$?) is undecidable.
    By Shafrir's result~\cite[Theorem~8.1]{shafrir2024decidability}, the membership problem in $S$ reduces to the membership problem in a finitely generated submonoid of the group $G \coloneqq L \times H$, where $H$ is finitely generated virtually abelian.
    Therefore, Submonoid Membership is undecidable in $G = L \times H$. 
    
    Since $H$ is finitely generated virtually abelian, it has a finite index subgroup $\Z^d \leq H$ for some $d$.
    Let $\widetilde{G} \coloneqq L \times \Z^d$, so the index $[G \colon \widetilde{G}] = [H \colon \Z^d]$ is finite.
    We now show that Submonoid Membership is decidable in $\widetilde{G}$.
    
    Note that $\widetilde{G} = L \times \Z^d = \left((\Z/2\Z) \wr \Z^2 \right) \times \Z^d$ can be written as a semidirect product $\mY \rtimes \Z^{d+2}$, where $\mY \coloneqq \F_2[X_1^{\pm}, X_2^{\pm}]$ is considered as an $\F_2[X_1^{\pm}, X_2^{\pm}, X_3^{\pm} \ldots, X_{d+2}^{\pm}]$-module where the elements $X_3, \ldots, X_{d+2}$ act trivially. That is, $\mY = \F_2[X_1^{\pm}, \ldots, X_{d+2}^{\pm}]/\sum_{i=3}^{d+2}\F_2[X_1^{\pm}, \ldots, X_{d+2}^{\pm}] \cdot (X_i - 1)$. So by Theorem~\ref{thm:main}, Submonoid Membership is decidable in $\widetilde{G}$.
\end{proof}

Our proof of Theorem~\ref{thm:main} is divided into two parts.
First, in Section~\ref{sec:SMtoSunit}, we reduce Submonoid Membership to solving \emph{S-unit equations} over modules. 
Then, in Section~\ref{sec:Sunit}, we prove the following result for S-unit equations over modules.

\begin{restatable}{thrm}{thmSunit}\label{thm:Sunit}
    Let $p$ be a prime number, $\oX = (X_1, \ldots, X_n)$ be a tuple of variables, $M$ be a finitely presented $\F_p[\oX^{\pm}]$-module, and let $c_0, c_1, \ldots, c_m \in M$.
    Then, the set of solutions $(\bz_1, \ldots, \bz_m) \in \left(\Z^{n}\right)^{m}$ for the equation
    \begin{equation}\label{eq:Sunitori}
        \oX^{\bz_1} \cdot c_1 + \cdots + \oX^{\bz_m} \cdot c_m = c_0
    \end{equation}
    is an effectively $p$-automatic subset of $\left(\Z^{n}\right)^{m} \cong \Z^{nm}$.
\end{restatable}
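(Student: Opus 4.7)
The plan is to construct a finite automaton over the alphabet $\Sigma_p^{nm}$ that reads the base-$p$ digits of the $m$ exponent vectors $\bz_i \in \Z^n$ in parallel and accepts exactly the solutions of~\eqref{eq:Sunitori}. This extends the Adamczewski--Bell strategy from polynomial rings to finitely presented modules; the key tools are the ``freshman's dream'' $(f+g)^p = f^p + g^p$ over $\F_p$ and the Noetherianity of finitely presented $\F_p[\oX^{\pm}]$-modules (with effective computation coming from Lemmas~\ref{lem:classicdec} and~\ref{lem:sgmod}).

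\textbf{Digit-wise recursion.} First I would write each $\bz_i = \bu_i + p \bz_i'$ with $\bu_i \in \{0, \ldots, p-1\}^n$ (handling the sign bit of each coordinate separately, read once at the start). Substituting into~\eqref{eq:Sunitori} yields
\[
\sum_{i=1}^m (\oX^{\bz_i'})^p \cdot (\oX^{\bu_i} c_i) \,=\, c_0.
\]
Setting $\oY \coloneqq (X_1^p, \ldots, X_n^p)$, this becomes $\sum_i \oY^{\bz_i'} \cdot (\oX^{\bu_i} c_i) = c_0$, which has the same form as the original equation but now over the $\F_p[\oY^{\pm}]$-module
\[
M'_{\bu_1, \ldots, \bu_m} \coloneqq \sum_{i,\bu \in \{0,\ldots,p-1\}^n} \F_p[\oY^{\pm}] \cdot \oX^{\bu} c_i \;\subseteq\; M.
\]
By Lemma~\ref{lem:sgmod}, this submodule admits an effectively computable finite presentation over $\F_p[\oY^{\pm}] \cong \F_p[\oX^{\pm}]$, and Lemma~\ref{lem:classicdec} lets me decide whether $c_0$ lies in it (if not, this digit tuple contributes no solutions). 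The automaton's transitions, labelled by digit tuples $(\bu_1, \ldots, \bu_m)$, then carry the ``current'' derived equation to the one produced by applying this substitution again to the $\bz_i'$.

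\textbf{Finiteness and the main obstacle.} The principal challenge is to prove that this recursion produces only finitely many reachable derived equations up to an effectively computable equivalence, so that the resulting automaton is finite-state. Naively the derived modules $M^{(k)}$ form an infinite family indexed by digit histories, but Noetherianity of $M$ combined with the controlled generation of each $M^{(k)}$ by monomial twists of the $c_i$ should force stabilization of some finitely-valued invariant---for instance, the isomorphism type of the derived module together with the residue class of the derived target $c_0^{(k)}$ in an appropriate quotient. Pinning down such an invariant, proving it only takes finitely many values along the recursion, and exhibiting an algorithm that decides the equivalence of two derived equations is where the bulk of the technical work will sit. Once the finite-state automaton is built, accepting states are those in which the equation becomes trivially satisfied (e.g.\ $c_0^{(k)} = 0$ with the all-zero exponent vector as a solution), and the closure properties of Lemma~\ref{lem:pauto} then upgrade the construction to effective $p$-automaticity of the full solution set.
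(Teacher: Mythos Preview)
Your digit-recursion framework is a natural first idea, but the step you flag as the ``main obstacle'' is in fact the entire content of the theorem, and the sketch you give does not close it. After $k$ rounds, the derived equation lives over $\F_p[\oX^{p^k\,\pm}]$, and to compare states you must transport back along the ring isomorphism $\F_p[\oX^{p^k\,\pm}] \cong \F_p[\oX^{\pm}]$; under this transport the module $M$ becomes its $k$-fold Frobenius twist, which is \emph{not} isomorphic to $M$ in general, and the coefficient tuples $(\oX^{\bw_1}c_1,\ldots,\oX^{\bw_m}c_m,c_0)$ range over an a priori infinite set. Bare Noetherianity of $M$ gives you ascending-chain finiteness for submodules, but the derived data are not an ascending chain of anything---they are points in a product of modules that genuinely moves with $k$. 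Even in the field case (Theorem~\ref{thm:Sunitexample}), proving that a suitable invariant takes only finitely many values is the substance of the Adamczewski--Bell argument and relies on nontrivial height/specialisation inputs; your proposal would have to reprove and strictly generalise that work, and nothing in the sketch indicates how.

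The paper takes a completely different route that sidesteps building the automaton by hand. It first uses primary decomposition to reduce to $\frp$-coprimary modules $M_i$, then applies Noether normalisation so that each $M_i$ becomes a finitely generated \emph{torsion-free} module over a polynomial subring $\K[Y_1,\ldots,Y_s]$; torsion-freeness lets one embed $M_i$ into a finite-dimensional vector space over the field $K=\K(Y_1,\ldots,Y_s)$, where multiplication by each $X_j$ is an invertible matrix. At that point the equation is exactly of the shape handled by the Adamczewski--Bell theorem on commuting matrices in $\GL_D(K)$ hitting a Zariski-closed set, which is invoked as a black box. The finiteness you were hoping to extract from Noetherianity is thus replaced by two structural reductions (primary decomposition, Noether normalisation) plus an appeal to the existing deep result over fields.
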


During the reduction in Section~\ref{sec:SMtoSunit}, we go through an intermediate step involving the \emph{Knapsack Problem} in groups. In particular, we obtain the following intermediate result that is of independent interest:

\begin{restatable}{thrm}{thmKP}\label{thm:KP}
    Let $p$ be a prime number, $\mY$ be a finitely presented $\F_p[\oX^{\pm}]$-module and $G = \mY \rtimes \Z^n$. 
    Let $h_1, \ldots, h_m, g$ be elements of $G$, then the solution set $(n_1, \ldots, n_m) \in \Z^m$ of
    \begin{equation}\label{eq:KP}
        h_1^{n_1} h_2^{n_2} \cdots h_m^{n_m} = g
    \end{equation}
    is an effectively $p$-automatic subset of $\Z^m$.
\end{restatable}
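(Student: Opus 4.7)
The plan is to reduce~\eqref{eq:KP} to an S-unit equation and then invoke Theorem~\ref{thm:Sunit}. Write $h_i = (y_i, \ba_i) \in \mY \times \Z^n$ and $g = (y_0, \ba_0)$. Iterating~\eqref{eq:defsemi2} yields
\[
h_1^{n_1}\cdots h_m^{n_m} = \left(\sum_{i=1}^m \oX^{\boldsymbol{w}_i(\bn)} \, S_{n_i}(\ba_i) \, y_i,\; \sum_{i=1}^m n_i \ba_i\right),
\]
where $\boldsymbol{w}_i(\bn) := \sum_{j<i} n_j \ba_j \in \Z^n$ and $S_n(\ba) := \sum_{k=0}^{n-1}\oX^{k\ba}$ for $n \geq 0$, extended to negative $n$ by the convention that makes $(\oX^{\ba} - 1)\,S_n(\ba) = \oX^{n\ba} - 1$ hold for every $n \in \Z$. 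So~\eqref{eq:KP} is equivalent to the conjunction of the linear Diophantine equation $\sum_i n_i \ba_i = \ba_0$ (whose solution set in $\Z^m$ is semilinear, hence effectively $p$-automatic) with the $\mY$-equation
\[
\sum_{i=1}^m \oX^{\boldsymbol{w}_i(\bn)}\, S_{n_i}(\ba_i)\, y_i = y_0.
\]

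To reduce the $\mY$-equation to an S-unit equation, the idea is to multiply through by $P := \prod_{i : \ba_i \neq \bzer}(\oX^{\ba_i} - 1)$ and apply the telescoping identity above. After multiplication, each nondegenerate summand becomes $\oX^{\boldsymbol{w}_{i+1}(\bn)}\, P_i y_i - \oX^{\boldsymbol{w}_i(\bn)}\, P_i y_i$ with $P_i := P / (\oX^{\ba_i} - 1)$ (using $\boldsymbol{w}_{i+1} = \boldsymbol{w}_i + n_i\ba_i$); collecting by monomial yields an equation of the form $\sum_{i=1}^{m+1} \oX^{\boldsymbol{w}_i(\bn)} c_i = c_0$ with effectively computable $c_j \in \mY$. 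Degenerate indices (those with $\ba_i = \bzer$) are handled by a finite branching on $n_i \bmod p$: in characteristic $p$ one has $S_{n_i}(\bzer)\, y_i = (n_i \bmod p)\, y_i$, so the contribution $\oX^{\boldsymbol{w}_i(\bn)} \cdot (n_i \bmod p)\, y_i$ collapses to a fixed monomial term in each branch.

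Theorem~\ref{thm:Sunit} then supplies an effectively $p$-automatic set of admissible $(\boldsymbol{w}_2, \ldots, \boldsymbol{w}_{m+1}) \in \Z^{nm}$ (noting that $\boldsymbol{w}_1 = \bzer$ is fixed). Pulling this back under the linear map $(n_1, \ldots, n_m) \mapsto (\boldsymbol{w}_2(\bn), \ldots, \boldsymbol{w}_{m+1}(\bn))$ using Lemma~\ref{lem:pauto}(3), intersecting with the semilinear $\Z^n$-constraint from the first coordinate, and taking the finite union over branching cases using Lemma~\ref{lem:pauto}(1) produces the desired $p$-automatic solution set in $\Z^m$.

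The main obstacle is that multiplication by $P$ is generally not reversible: the multiplied equation is equivalent to the original $\mY$-equation only modulo the $P$-torsion submodule $T := \{y \in \mY : P y = 0\}$, which by Lemma~\ref{lem:classicdec} is effectively computable but may be nonzero. To restore equivalence, I would pass to the quotient $\mY/T$ (where $P$ acts injectively and the reduction is sharp) and then analyze the residual contribution inside $T$ separately. On $T$ some of the factors $\oX^{\ba_i} - 1$ act as zero divisors, so the residual equation should reduce either to a strictly smaller instance (fewer nondegenerate indices, allowing an induction on this parameter) or to a further finite case branching handled by the same machinery.
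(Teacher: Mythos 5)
Your reduction to an S-unit equation is the right plan, and the bookkeeping (the telescoping identity $(\oX^{\ba}-1)S_n(\ba) = \oX^{n\ba}-1$, the partial sums $\boldsymbol{w}_i(\bn) = \bz_{i-1}$, and the finite branching on $n_i \bmod p$ for degenerate indices) matches the paper's Case~1 and Case~2 in spirit. However, there is a genuine gap exactly where you flag the ``main obstacle,'' and the proposed repair does not work.

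The problem: you multiply the $\mY$-equation $\sum_i \oX^{\boldsymbol{w}_i(\bn)} S_{n_i}(\ba_i) y_i = y_0$ by $P = \prod_{\ba_i \neq \bzer}(\oX^{\ba_i}-1)$ \emph{inside} $\mY$. Since $\mY$ can have $P$-torsion, the multiplied equation only records $r(\bn) := \sum_i \oX^{\boldsymbol{w}_i(\bn)} S_{n_i}(\ba_i) y_i - y_0 \in T$, not $r(\bn)=0$. Passing to $\mY/T$ gives you the condition $r(\bn) \equiv 0 \pmod T$ as a $p$-automatic set, but the remaining task is to decide, among those $\bn$, when $r(\bn)=0$ \emph{exactly}; this is the original problem restated, not a smaller one. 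In particular, the claimed induction on the number of nondegenerate indices does not go through: $P$ annihilating $T$ means $P \in \Ann(T)$, hence each associated prime of $T$ contains \emph{some} factor $\oX^{\ba_i}-1$, but a factor being in the prime does not make it act as zero on the primary piece, so you cannot drop that index. Moreover $\mY/T$ need not even be $P$-torsion-free (you would want the stable torsion $T_\infty = \bigcup_k \ker(P^k)$), and the ``residual contribution inside $T$'' is never reduced to a tractable form.

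The fix, which is what the paper does, is to \emph{lift to the free module before multiplying}. Write $\mY = \F_p[\oX^{\pm}]^d/N$ and view the $y_i$ as lifts $f_i \in \F_p[\oX^{\pm}]^d$. The $\mY$-equation is then the membership condition $\sum_i \oX^{\bz_{i-1}}\frac{1-\oX^{n_i\ba_i}}{1-\oX^{\ba_i}} f_i - f \in N$ in $\F_p[\oX^{\pm}]^d$. Since $\F_p[\oX^{\pm}]^d$ is torsion-free over the domain $\F_p[\oX^{\pm}]$ and $P \neq 0$, multiplication by $P$ is injective there, so the membership condition is \emph{equivalent} (not merely implied by) to $P\bigl(\sum_i \oX^{\bz_{i-1}}\frac{1-\oX^{n_i\ba_i}}{1-\oX^{\ba_i}} f_i - f\bigr) \in PN$. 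After telescoping this is an S-unit equation over the quotient module $M := \F_p[\oX^{\pm}]^d/PN$, to which Theorem~\ref{thm:Sunit} applies directly. No torsion correction, quotient by $T$, or induction is needed; Lemma~\ref{lem:pauto} then handles the pullback along $\bn \mapsto (\bz_1,\ldots,\bz_m)$ and the intersection with the linear $\Z^n$-constraint, just as you describe.
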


\section{From Submonoid Membership to S-unit equation}\label{sec:SMtoSunit}

In this section we reduce the problem of Submonoid Membership in $\mY \rtimes \Z^n$ to solving S-unit equations over $\F_p[\oX^{\pm}]$-modules.
The reduction is done in three steps. In Step 1, we reduce Submonoid Membership in $\mY \rtimes \Z^n$ to deciding membership in a product of conjugate groups (Proposition~\ref{prop:SMtoGP}). In Step 2, we reduce membership in a product of conjugate groups to solving the Knapsack Problem in another semidirect product $\mY' \rtimes \Z^{n'}$ (Proposition~\ref{prop:GPtoKP}). In Step 3, we show that the solution set of the Knapsack Problem is effectively $p$-automatic (Theorem~\ref{thm:KP}), assuming a theorem on S-unit equations (Theorem~\ref{thm:Sunit}).

Recall that the \emph{rank} of an abelian group $A$ is defined as smallest cardinality of a generating set for $A$.
If $A \leq \Z^n$, then $A$ is a lattice in $\Z^n$, the rank of $A$ is the dimension of this lattice.

\paragraph*{Step 1: from Submonoid Membership to Group Product Membership}

Given $q \in G$ and a subgroup $H \leq G$, define the conjugate subgroup $\leftindex^{q}{H} \coloneqq \{qhq^{-1} \mid h \in H\}$.
For $q_1, q_2, \ldots, q_k \in G$, denote the set
\begin{equation}\label{eq:prodgroup}
\leftindex^{q_1}{H} \cdot \leftindex^{q_2}{H} \cdot \ldots \cdot \leftindex^{q_{k}}{H} \coloneqq \{h_1 h_2 \cdots h_k \mid h_1 \in \leftindex^{q_1}{H}, \ldots, h_k \in \leftindex^{q_{k}}{H}\}.
\end{equation}
Note that $\leftindex^{q}{H}$ is always a group, but the product $\leftindex^{q_1}{H} \cdot \leftindex^{q_2}{H} \cdot \ldots \cdot \leftindex^{q_{k}}{H}$ is in general not a group.
%The first step of our reduction is the following result by Shafrir that reduces Submonoid Membership to the membership problem in sets of the form~\eqref{eq:prodgroup}.
%We give the proof here for the sake of completeness.
Our first step is the following reduction from Submonoid Membership to the membership problem in sets of the form~\eqref{eq:prodgroup}, originally due to Shafrir.
We give a proof here for the sake of completeness.
Recall that the projection $\pi \colon G = \mY \rtimes \Z^n \rightarrow \Z^n$ is surjective and its kernel is $\mY$.
A group $A$ is called \emph{torsion} if for all $a \in A$, there exists $t \geq 1$ such that $a^t$ is the neutral element.
In particular, an $\F_p[\oX^{\pm}]$-module $\mY$ considered as an abelian group is torsion, because $p y = 0$ for all $y \in \mY$.

\begin{prop}[{\cite[Theorem~4.3.3]{potthast2020submonoid}}]\label{prop:SMtoGP}
    Let G be a finitely generated group and $\pi \colon G \rightarrow \Z^n$ be a surjective homomorphism with $\ker(\pi)$ being a torsion group. Submonoid Membership in $G$ can be reduced to the following membership problem:
    
    \textbf{Input:} elements $g, q_1, \ldots, q_k \in G$ and finitely many generators of a subgroup $H \leq G$.
    
    \textbf{Question:} whether $g \in \leftindex^{q_1}{H} \cdot \leftindex^{q_2}{H} \cdot \ldots \cdot \leftindex^{q_{k}}{H}$?
\end{prop}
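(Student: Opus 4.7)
The plan is to use the surjection $\pi$ to translate Submonoid Membership into a finite collection of Parikh-constrained questions, each reducible to checking membership in a product of conjugate subgroups, with the torsion hypothesis on $\ker(\pi)$ being used in an essential way.

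First, I would project via $\pi$. Let $v_j := \pi(g_j)$ and $v := \pi(g)$. Any identity $g = g_{i_1} g_{i_2} \cdots g_{i_N}$ in the submonoid projects to $v = \sum_j N_j v_j$ in $\Z^n$, where $(N_1, \ldots, N_k)$ is the Parikh vector of $(i_1,\ldots,i_N)$. The set $P := \{\bn \in \Zp^k : \sum_j n_j v_j = v\}$ is semilinear, and I would compute a decomposition $P = \bigcup_{s=1}^S (u_s + L_s^*)$ into finitely many linear pieces by standard integer programming on $\Z^n$. The overall problem then splits into finitely many subproblems, one per linear piece.

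Second, I would fix one piece $u + \{\sum_l \lambda_l p_l : \lambda_l \in \Zp\}$ and observe that each period $p_l$ satisfies $\sum_j p_{l,j} v_j = 0$, so the element $P_l := g_1^{p_{l,1}} \cdots g_k^{p_{l,k}}$ lies in $T := \ker(\pi)$ and hence has finite order $t_l$ by the torsion hypothesis. Using the telescoping identity
\[
g_{i_1} g_{i_2} \cdots g_{i_N} \;=\; \leftindex^{u_N}{g_{i_N}} \cdot \leftindex^{u_{N-1}}{g_{i_{N-1}}} \cdots \leftindex^{u_1}{g_{i_1}}, \qquad u_m := g_{i_1} \cdots g_{i_{m-1}},
\]
any submonoid product decomposes as a product of conjugates of single generators by prefix partial products, which after grouping consecutive equal letters can be rewritten as a product of elements of the form $\leftindex^{q}{h}$ with $h \in H := \langle g_1, \ldots, g_k \rangle$ and $q$ a prefix submonoid product whose $\pi$-image is a partial Parikh sum of the word.

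Third, I would collapse the a priori infinite family of conjugating prefixes into a finite list $q_1, \ldots, q_K$ using the torsion orders $t_l$: each time a complete period block $P_l^{t_l}$ is absorbed into a prefix, the prefix returns to a previously visited class modulo $T$, so the distinct conjugating elements are finite modulo torsion. Setting the canonical base word $W_0 := g_1^{u_1} \cdots g_k^{u_k}$, the subproblem for this piece then reduces to checking whether $g \cdot W_0^{-1}$ lies in $\leftindex^{q_1}{H} \cdots \leftindex^{q_K}{H}$, which is an instance of the target membership problem. The main obstacle will be precisely this step: bookkeeping carefully enough to show that the torsion of $\ker(\pi)$ collapses the potentially unbounded combinatorial variability of orderings into a bounded family of conjugating elements, while preserving completeness (every submonoid witness must be captured by some instance of the reduced problem). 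The torsion hypothesis is indispensable here, which is consistent with the undecidability of Submonoid Membership in $\Z \wr \Z^n$ obtained in~\cite{lohrey2015rational}.
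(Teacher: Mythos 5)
Your proposal diverges substantially from the paper's proof, and I do not believe the Parikh/semilinear route can be made to work without effectively rediscovering the paper's argument. The paper does not decompose the set of feasible Parikh vectors into linear pieces at all. Instead, it separates the generators $g_1,\ldots,g_m$ according to whether $\pi(g_i)$ lies in the lineality space $\mC_0$ of the cone $\mC$ generated by $\pi(g_1),\ldots,\pi(g_m)$: those with $\pi(g_i)\in\mC_0$ (say $g_1,\ldots,g_\ell$) may appear arbitrarily often, while those with $\pi(g_i)\in\mC\setminus\mC_0$ can appear at most a computable bounded number of times $B$ in any word representing $g$, by pairing with a functional $v$ that is zero on $\mC_0$ and strictly positive on $\mC\setminus\mC_0$. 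The torsion hypothesis is then used in a local and decisive way: since $\pi(g_1),\ldots,\pi(g_\ell)$ positively span $\mC_0$, there are positive integers $n_i$ with $\sum n_i\pi(g_i)=\bzer$, so $g_1^{n_1}\cdots g_\ell^{n_\ell}\in\ker(\pi)$ is torsion, and from this one explicitly writes each $g_i^{-1}$ ($i\le\ell$) as a positive word. Hence $H:=\gen{g_1,\ldots,g_\ell}$ is a \emph{group}, and for each of the finitely many interleaving patterns $g_{j_1},\ldots,g_{j_b}$ (with $b\le B$) one obtains a single membership question in $H g_{j_1}H\cdots g_{j_b}H$, which is rewritten as a product of conjugates of $H$ by the fixed elements $g_{j_1}, g_{j_1}g_{j_2}, \ldots$.

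The gap in your argument is the step where you "collapse the a priori infinite family of conjugating prefixes into a finite list." The telescoping identity you write produces a product of conjugates $\leftindex^{u_m}{g_{i_m}}$ of \emph{single generators}, with one conjugator $u_m$ per letter of the word. There is no finite bound on the number of distinct conjugators: the prefixes $u_m$ project onto unboundedly many points of $\Z^n$, so they are not finite modulo $\ker(\pi)$, and the torsion hypothesis only says each individual element of $\ker(\pi)$ has finite order, not that $\ker(\pi)$ is finite or that prefixes fall into finitely many relevant classes. Your claim that "each time a complete period block $P_l^{t_l}$ is absorbed into a prefix, the prefix returns to a previously visited class modulo $T$" presupposes that the word can be aligned with period blocks in a fixed order, but the group is non-abelian and the letters can be interleaved arbitrarily, so Parikh information does not control the group element. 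This is exactly the information that the paper's cone decomposition retains and your semilinear decomposition of $P$ discards: knowing the multiset of letters used does not tell you which of them you may reorder or invert, whereas identifying the lineality-space generators and proving $H$ is a group does.

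To repair the argument you would need to establish, at minimum, (i) an a priori bound on how many times the "non-lineality" generators appear, which your Parikh decomposition does not directly give, and (ii) that the remaining generators generate a \emph{group} rather than a monoid, which is where the torsion hypothesis must be invoked. Once you have those two facts, the reduction to a product of conjugate subgroups is a short rewriting; without them, the infinite family of conjugating prefixes cannot be collapsed.
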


\begin{proof}
    For a subset $T \subseteq G$, denote by $\gen{T}$ the submonoid generated by $T$: this is the set of elements of the form $g_1 g_2 \dots g_s$ where $s \geq 0$ and $g_i \in T$ for $1 \leq i \leq s$.
    
    Let $g_1, g_2, \ldots, g_m, g \in G$, and suppose we want to decide whether $g \in \gen{g_1, g_2, \ldots, g_m}$.
    Consider the elements $\pi(g_1), \ldots, \pi(g_m), \pi(g) \in \Z^n \subset \R^n$, let $\mC \subseteq \R^n$ be the $\R_{\geq 0}$-cone generated by $\pi(g_1), \pi(g_2), \ldots, \pi(g_m)$, and let $\mC_0$ be the maximal $\R$-subspace of $\mC$.
    In particular, there exists $v \in \R^n$, such that $v^{\top} \cdot w = 0$ for all $w \in \mC_0$, and $v^{\top} \cdot w > 0$ for all $w \in \mC \setminus \mC_0$ (if $\mC_0 = \mC$ then we can take $v = \bzer$).
    Without loss of generality suppose $\pi(g_1) , \ldots, \pi(g_{\ell}) \in \mC_0$ and $\pi(g_{\ell + 1}) , \ldots, \pi(g_{m}) \in \mC \setminus \mC_0$, for some $1 \leq \ell \leq m$.

    If $g \in \gen{g_1, g_2, \ldots, g_m}$, then we can write $g$ as a word $g_{i_1} g_{i_2} \cdots g_{i_s}$ over $\{g_1, \ldots, g_m\}$. 
    In this case, $\pi(g) = \pi(g_{i_1}) + \pi(g_{i_2}) + \cdots + \pi(g_{i_s})$, so we must have $v^{\top} \cdot \pi(g) \geq 0$, and the elements $g_{\ell + 1}, \ldots, g_m$ can only be used a bounded number of times (in fact, bounded by $B \coloneqq v^{\top} \cdot \pi(g)/ \min\left\{v^{\top} \cdot \pi(g_{\ell + 1}), \ldots, v^{\top} \cdot \pi(g_{m})\right\}$).
    Therefore, it suffices to check, for each possible choice of $g_{j_1}, \ldots, g_{j_b} \in \{g_{\ell + 1}, \ldots, g_m\}, \; b \leq B$, whether $g$ can be written as $h_1 g_{j_1} h_2 g_{j_2} h_3 \cdots g_{j_b} h_{b+1}$ for $h_1, \ldots, h_{b+1} \in \gen{g_1, g_2, \ldots, g_{\ell}}$.
    
    Let $H \coloneqq \gen{g_1, g_2, \ldots, g_{\ell}}$, we claim that the monoid $H$ is actually a group. 
    It is easy to see that the $\R_{\geq 0}$-cone generated by $\pi(g_1) , \ldots, \pi(g_{\ell})$ is the linear space $\mC_0$.
    Therefore we can find positive integers $n_1, \ldots, n_{\ell} > 0$ such that $n_1 \pi(g_1) + \cdots + n_{\ell} \pi(g_{\ell}) = \bzer$. Thus, $\pi(g_1^{n_1} g_2^{n_2} \cdots g_{\ell}^{n_{\ell}}) = \bzer$, so $g_1^{n_1} g_2^{n_2} \cdots g_{\ell}^{n_{\ell}} \in \ker(\pi)$.
    But $\ker(\pi)$ is torsion, so $\left( g_1^{n_1} g_2^{n_2} \cdots g_{\ell}^{n_{\ell}} \right)^t$ is the neutral element for some $t \geq 1$.
    This yields $g_1^{-1} = g_1^{n_1 - 1} g_2^{n_2} \cdots g_{\ell}^{n_{\ell}} \left( g_1^{n_1} g_2^{n_2} \cdots g_{\ell}^{n_{\ell}} \right)^{t-1} \in H$.
    Similarly, $g_2^{-1}, \ldots, g_{\ell}^{-1} \in H$, so the monoid $H$ is actually a group.

    By the discussion above, it suffices to check for finitely many instances of $g_{j_1}, \ldots, g_{j_b}$, whether $g \in H g_{j_1} H g_{j_2} H \cdots g_{j_b} H$.
    But $H g_{j_1} H g_{j_2} H \cdots g_{j_b} H$ is equal to
    \[
    H (g_{j_1} H g_{j_1}^{-1}) (g_{j_1} g_{j_2} H g_{j_2}^{-1} g_{j_1}^{-1}) \cdots (g_{j_1} g_{j_2} \cdots g_{j_b} H g_{j_b}^{-1} \cdots g_{j_2}^{-1} g_{j_1}^{-1}) g_{j_1} g_{j_2} \cdots g_{j_b}.
    \]
    So $g \in H g_{j_1} H g_{j_2} H \cdots g_{j_b} H$ is equivalent to
    \[
    g g_{j_b}^{-1} \cdots g_{j_2}^{-1} g_{j_1}^{-1} \in H \cdot \leftindex^{g_{j_1}}{H} \cdot \leftindex^{g_{j_1} g_{j_2}}{H} \cdot \ldots \cdot \leftindex^{g_{j_1} g_{j_2} \cdots g_{j_b}}{H},
    \]
    which is a product of conjugate groups.
    This finishes the reduction from Submonoid Membership to membership in products of conjugate groups.
\end{proof}

%The proof of Proposition~\ref{prop:SMtoGP} is rather elementary, and we give the following Example~\ref{expl:running} to illustrate its main idea, so readers do not have to treat it as a complete blackbox.
%Example~\ref{expl:running} also serves as the running example of this section.
%For a subset $S \subseteq G$, denote by $\gen{S}$ the submonoid generated by $S$: this is the set of elements of the form $g_1 g_2 \dots g_m$, $m \geq 0, g_i \in S$ for $1 \leq i \leq m$.
In particular, for the case of $G =  \mY \rtimes \Z^n$, Proposition~\ref{prop:SMtoGP} is effective, meaning one can explicitly compute the elements $q_1, \ldots, q_k \in G$ and the generators of $H$.

\begin{exmpl}[label=exa:cont]\label{expl:running}
Let $\mY = \F_2[X_1^{\pm}, X_2^{\pm}]$ considered as an $\F_2[X_1^{\pm}, X_2^{\pm}]$-module, so $\mY \rtimes \Z^2$ is the 2-dimensional lamplighter group $(\Z/2\Z) \wr \Z^2$.
Let $g_1 = \big(1 + X_2, \, (2, 0)\big), g_2 = \big(1, \, (- 2, 0)\big), g_3 = \big(1 + X_1, \, (0, 1)\big)$ be the generators of a submonoid, and let $g = \big(X_2^2, \, (4, 2)\big)$.

We have $\pi(g_1) = (2, 0), \; \pi(g_2) = (-2, 0), \; \pi(g_3) = (0, 1)$, so the $\R_{\geq 0}$-cone they generate is $\mC = \{(x, y) \in \R^2 \mid y \geq 0\}$.
Its maximal subspace is $\mC_0 = \{(x, 0) \mid x \in \R\}$.
We have $\pi(g_1), \pi(g_2) \in \mC_0$ and $\pi(g_3) \in \mC \setminus \mC_0$.

Suppose $g = g_{i_1} g_{i_2} \cdots g_{i_s}$ with $i_1, i_2, \ldots, i_s \in \{1, 2, 3\}$.
Let $v = (0, 1)$.
Since $v^{\top} \cdot \pi(g) = v^{\top} \cdot (4, 2) = 2$, and $v^{\top} \cdot \pi(g_1) = v^{\top} \cdot \pi(g_2) = 0, \; v^{\top} \cdot \pi(g_3) = 1$, the element $g_3$ must appear exactly twice in the product $g_{i_1} g_{i_2} \cdots g_{i_s}$.
Therefore, $g$ must be of the form $w_1 g_3 w_2 g_3 w_3$, where $w_1, w_2, w_3$ are words over $\{g_1, g_2\}$.
Let $H$ be the monoid generated by $g_1, g_2$: this is actually a group because $(g_1 g_2)^2 = (0, \bzer)$.
The above discussion shows $g \in \gen{g_1, g_2, g_3}$ if and only if $g \in H g_3 H g_3 H$: this is equivalent to $g g_3^{-2} \in H \cdot \leftindex^{g_3}{H} \cdot \leftindex^{g_3^{2}}{H}$.
\hfill $\blacksquare$
\end{exmpl}

\begin{rmk}
If $G = (\Z/2\Z) \wr \Z^2$ and $\pi(H)$ is of rank one (i.e.\ $\pi(H) \cong \Z$) as in Example~\ref{expl:running}, then membership problem in the product $\leftindex^{q_1}{H} \cdot \leftindex^{q_2}{H} \cdot \ldots \cdot \leftindex^{q_{k}}{H}$ can be reduced to Rational Subset Membership in the 1-dimensional lamplighter group $(\Z/2\Z) \wr \Z$, which is known to be decidable~\cite{lohrey2015rational}. In fact, this is the key idea in Shafrir's solution~\cite{potthast2020submonoid} for Submonoid Membership in 2-dimensional lamplighter groups.
However, when the dimension $n$ is greater or equal to three, the rank of $\pi(H)$ may be greater or equal to two: in this case, the membership problem in $\leftindex^{q_1}{H} \cdot \leftindex^{q_2}{H} \cdot \ldots \cdot \leftindex^{q_{k}}{H}$ has been open and will be solved in the rest of this paper.
Notably, we do \emph{not} want to reduce it to Rational Subset Membership in lamplighter groups of dimension $\geq 2$, as it is undecidable~\cite{lohrey2011tilings}.
\end{rmk}

\paragraph*{Step 2: from Group Product Membership to Knapsack Problem}
The goal of this step is to reduce membership problem in the product $\leftindex^{q_1}{H} \cdot \leftindex^{q_2}{H} \cdot \ldots \cdot \leftindex^{q_{k}}{H}$ to a version of the \emph{Knapsack Problem} in groups.
Namely, we will prove:
\begin{prop}\label{prop:GPtoKP}
    Let $G = \mY \rtimes \Z^n$ where $\mY$ is a finitely presented $\F_p[X_1^{\pm}, \ldots, X_{n}^{\pm}]$-module.
    Let $g, q_1, \ldots, q_k \in G$ and let $H \leq G$ be a finitely generated subgroup.
    Then, one can effectively find a group $G' = \mY' \rtimes \Z^{n'}$, where $\mY'$ is a finitely presented $\F_p[X_1^{\pm}, \ldots, X_{n'}^{\pm}]$-module, as well as elements $g', h_1, \ldots, h_m \in G'$, such that
    \[
    g \in \leftindex^{q_1}{H} \cdot \leftindex^{q_2}{H} \cdot \ldots \cdot \leftindex^{q_{k}}{H}
    \]
    if and only if there exist $z_1, \ldots, z_m \in \Z$ satisfying
    \begin{equation}\label{eq:KP2}
        h_1^{z_1} h_2^{z_2} \cdots h_m^{z_m} = g'.
    \end{equation}
\end{prop}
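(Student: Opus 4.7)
The plan is to reduce Group Product Membership to the Knapsack Problem in three conceptual steps: put $H$ into a canonical normal form, rewrite the membership equation with mixed integer and module unknowns, and then absorb the module unknowns into integer knapsack exponents by passing to a suitably enlarged semidirect product.

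\emph{Normal form and reformulation.} Using Lemma~\ref{lem:sgmod} and elementary linear algebra, I would effectively compute a $\Z$-basis $\bv_1, \ldots, \bv_r$ of $\Lambda := \pi(H)$, lifts $\hat{h}_1, \ldots, \hat{h}_r \in H$ with $\pi(\hat{h}_l) = \bv_l$, and a finite $\F_p[\oX^{\Lambda}]$-module presentation of $Y := H \cap \mY$ with generators $t_1, \ldots, t_u$. Since the $\bv_l$'s form a $\Z$-basis of $\pi(H)$ and $\ker(\pi|_H) = Y$, every $h \in H$ admits a unique normal form $h = \hat{h}_1^{a_1} \cdots \hat{h}_r^{a_r}\, y$ with $(a_1, \ldots, a_r) \in \Z^r$ and $y \in Y$. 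Substituting this into the condition $g \in \leftindex^{q_1}{H} \cdot \ldots \cdot \leftindex^{q_k}{H}$ and using $q_i \hat{h}_l^{a} q_i^{-1} = (q_i \hat{h}_l q_i^{-1})^a$, the condition reduces to the existence of $(a_{i,l}) \in \Z^{kr}$ and $(y_i) \in Y^k$ satisfying
\[
\prod_{i=1}^k \Bigl( \prod_{l=1}^r (q_i \hat{h}_l q_i^{-1})^{a_{i,l}} \Bigr) \cdot (q_i y_i q_i^{-1}) = g.
\]
The $a_{i,l}$'s are already in a knapsack-friendly form; the remaining difficulty is that the $y_i$'s range over $Y$, which may be an infinite-dimensional $\F_p$-vector space.

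\emph{Enlarging the ambient group.} To encode the $y_i$'s, I would build $G' = \mY' \rtimes \Z^{n+k}$ by adjoining $k$ new commuting variables $X_{n+1}, \ldots, X_{n+k}$ (one per factor $\leftindex^{q_i}{H}$) and defining $\mY'$ as a carefully chosen finitely presented $\F_p[X_1^{\pm}, \ldots, X_{n+k}^{\pm}]$-module that extends $\mY$ and couples the action of each new variable $X_{n+i}$ to the action of $\oX^{\Lambda}$ on the generators $\oX^{\pi(q_i)} t_\ell$ of $q_i Y q_i^{-1}$. The knapsack slots in $G'$ consist of the embedded conjugates $(q_i \hat{h}_l q_i^{-1})$ together with auxiliary ``generator'' and ``canceller'' slots carrying $\pm \boldsymbol{e}_{n+i}$ in the $\Z^k$-component. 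The target $g' \in G'$ is chosen with zero $\Z^k$-component, which forces the auxiliary $X_{n+i}$-polynomials in each block to cancel and, via the relations in $\mY'$, to translate into an arbitrary element of $q_i Y q_i^{-1}$ in the module coordinate. The resulting knapsack equation $h_1^{z_1} \cdots h_m^{z_m} = g'$ in $G'$ is then satisfiable if and only if the displayed membership equation is.

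\emph{Main obstacle.} The delicate point is designing $\mY'$ and $g'$ in this last step so that both directions of the equivalence hold: the relations in $\mY'$ must be weak enough that every element of $q_i Y q_i^{-1}$ is realisable by some choice of knapsack exponents (the ``only if'' direction), yet strong enough that no knapsack solution projects to an invalid tuple $(a_{i,l}, y_i)$ (the ``if'' direction), and $\mY'$ must remain finitely presented so that Theorem~\ref{thm:KP} applies to the resulting group. The interplay between the $\Z^k$-balance constraints imposed by the knapsack equation and the module relations encoded in $\mY'$ is where the essential content of the reduction lies.
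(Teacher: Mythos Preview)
Your first step (the normal form for $H$ and the reformulation of the membership condition as the existence of $(a_{i,l}) \in \Z^{kr}$ and $(y_i) \in Y^k$ satisfying a single equation in $G$) is correct and matches the paper exactly. The divergence is in how you propose to eliminate the module unknowns $y_i$, and that is where the gap lies.

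Your plan is to \emph{encode} each $y_i \in q_i Y q_i^{-1}$ by adjoining one new variable $X_{n+i}$ and designing $\mY'$ so that knapsack exponents in $X_{n+i}$ sweep out all of $q_i Y q_i^{-1}$. But $Y$ is a finitely generated $\F_p[\oX^{\Lambda}]$-module, and a general element of it is $\sum_\ell f_\ell t_\ell$ with the $f_\ell$ \emph{arbitrary Laurent polynomials} in $d$ variables. In a knapsack product with a fixed number $m$ of slots, the module coordinate has the shape $\sum_j \oX^{\bz_{j-1}} \frac{1-\oX^{z_j \ba_j}}{1-\oX^{\ba_j}} f_j$, i.e.\ a sum of $m$ shifted geometric progressions with fixed weights $f_j$. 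Such expressions, parametrised by $m$ integers, cannot in general cover a whole polynomial module: already in $\F_p[X^{\pm}]$ the set of polynomials expressible as a sum of $m$ geometric progressions is a proper subset. Adding one extra variable $X_{n+i}$ per factor does not change this count, and you give no mechanism (nor, I believe, is there one) for a finitely presented $\mY'$ to turn finitely many integer exponents into an arbitrary element of $Y$ while remaining faithful enough for the ``if'' direction. You yourself flag this as ``the delicate point'', but it is not merely delicate---the parametrisation you need does not exist with boundedly many slots.

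The missing idea is to go the other way: rather than \emph{parametrise} the $y_i$'s, \emph{quotient them out}. Observe that when you move all the $(q_i y_i q_i^{-1})$ factors to one side (using $(f,\ba)(y,\bzer) = (\oX^{\ba} y,\bzer)(f,\ba)$), the condition becomes
\[
\Bigl(\prod_{i,l} (q_i \hat{h}_l q_i^{-1})^{a_{i,l}}\Bigr) g^{-1} \in \tY,
\qquad \tY := \sum_{i=1}^k \mY_i, \quad \mY_i := \mY \cap {}^{q_i}H,
\]
and crucially $\tY$ is an $\F_p[\oX^{\Lambda}]$-\emph{submodule} of $\mY$ (each $\mY_i$ is, and the sum of submodules is a submodule). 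Let $\mF$ be the $\F_p[\oX^{\Lambda}]$-submodule of $\mY$ generated by the module coordinates of $g$ and of all the $q_i \hat h_l q_i^{-1}$; then the displayed condition is exactly a Knapsack equation in the group $G' := (\mF/\tY) \rtimes \pi(H)$, which after choosing a basis of $\pi(H)\cong\Z^d$ has the required form $\mY' \rtimes \Z^{d}$ with $\mY'=\mF/\tY$ finitely presented over $\F_p[X_1'^{\pm},\ldots,X_d'^{\pm}]$ (Lemmas~\ref{lem:classicdec} and~\ref{lem:sgmod}). No new variables, no encoding gadget: the module unknowns disappear into the quotient.
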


We now proceed to prove Proposition~\ref{prop:GPtoKP}.
Recall that $\mY$ can be identified with the subgroup $\mY \times \{\bzer\} = \ker(\pi)$ of $G = \mY \rtimes \Z^n$. 
%In what follows, we will also consider any submodule $\tY \subseteq \mY$ as a subgroup of $G$. In particular, we say an element $(f, \bz)$ is in $\tY$ if and only if $f \in \tY$ and $\bz = \bzer$.
First, we give the following classic result on the structure of subgroups $H \leq G$.
Note that $\pi(H) \leq \pi(G) = \Z^n$, let $d \leq n$ be the rank of $\pi(H)$.

\begin{lem}[{\cite[Theorem~3.3]{baumslag1994algorithmic} or \cite[Lemma~2]{romanovskii1974some}}]\label{lem:structmeta}
    Let $G = \mY \rtimes \Z^n$ and $H \leq G$ be a finitely generated subgroup. Let $\ba_1, \ldots, \ba_{d} \in \Z^n$ be a $\Z$-basis of $\pi(H)$, and choose elements $h_1, \ldots, h_d \in H$ so that $\pi(h_i) = \ba_i, i = 1, \ldots, d$.
    Then, the subgroup $\mY \cap H$ is a finitely generated $\F_p[\oX^{\pi(H)}]$-module, whose finite presentation can be effectively computed. Furthermore, $H$ is generated by $h_1, \ldots, h_d$ and the set $\mY \cap H$.
\end{lem}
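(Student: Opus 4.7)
The plan is to reduce the statement to a normal-form argument inside the semidirect-product structure $H \cong (\mY \cap H) \rtimes \pi(H)$, after which effectiveness is handed over to Lemma~\ref{lem:sgmod}. Starting from a finite generating set $g_1, \ldots, g_k$ of $H$, I would apply Hermite normal form to the integer vectors $\pi(g_1), \ldots, \pi(g_k) \in \Z^n$ in order to obtain both a $\Z$-basis $\ba_1, \ldots, \ba_d$ of $\pi(H)$ and explicit integer coefficients expressing each $\ba_j$ as a $\Z$-combination of the $\pi(g_i)$'s. Using those same coefficients inside $H$ yields explicit elements $h_1, \ldots, h_d \in H$ with $\pi(h_j) = \ba_j$. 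The last sentence of the lemma is then immediate: for any $g \in H$, writing $\pi(g) = n_1 \ba_1 + \cdots + n_d \ba_d$, one has $h_d^{-n_d} \cdots h_1^{-n_1} g \in \ker(\pi) \cap H = \mY \cap H$, so $g$ factors as an element of $\gen{h_1, \ldots, h_d}$ times an element of $\mY \cap H$.

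Next, the conjugation formula $(f, \ba)(y, \bzer)(f, \ba)^{-1} = (\oX^{\ba} \cdot y, \bzer)$ recalled in the preliminaries endows $\mY \cap H$ with the structure of an $\F_p[\oX^{\pi(H)}]$-submodule of $\mY$: for $\ba \in \pi(H)$ and $y \in \mY \cap H$, any $h \in H$ with $\pi(h) = \ba$ satisfies $\oX^{\ba} \cdot y = h y h^{-1} \in \mY \cap H$, and the result is independent of the choice of $h$ since the ambiguity lies in the abelian group $\mY$. Explicit module generators can then be exhibited: for each generator $g_i$, write $\pi(g_i) = \sum_j m_{i,j}\ba_j$ and set $y_i^* \coloneqq h_d^{-m_{i,d}} \cdots h_1^{-m_{i,1}} g_i \in \mY \cap H$, so that $g_i = h_1^{m_{i,1}} \cdots h_d^{m_{i,d}} \cdot y_i^*$. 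The claim is that $\mY \cap H = \sum_{i=1}^k \F_p[\oX^{\pi(H)}] \cdot y_i^*$. The inclusion $\supseteq$ is direct. For $\subseteq$, any $y \in \mY \cap H$ is a word in the $g_i^{\pm 1}$; substituting the formulas for $g_i^{\pm 1}$ in terms of the $h_j$'s and $y_i^*$'s, and repeatedly applying the rewriting rule $h_j \cdot y' = (\oX^{\ba_j} \cdot y') \cdot h_j$ (valid for every $y' \in \mY$) to push all $h_j$-factors to the right, yields $y = w \cdot h_1^{N_1} \cdots h_d^{N_d}$ with $w \in \sum_i \F_p[\oX^{\pi(H)}] \cdot y_i^*$. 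Applying $\pi$ forces $\sum_j N_j \ba_j = \bzer$, hence $N_j = 0$ by linear independence of the $\ba_j$, and $y = w$ as required.

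For effectiveness, the elements $y_1^*, \ldots, y_k^*$ are directly computable from the input, and Lemma~\ref{lem:sgmod} then outputs a finite presentation of the $\F_p[\oX^{\pi(H)}]$-submodule they generate. The main obstacle is the normal-form rewriting in the second paragraph: one must verify that the process of pushing all $h_j$-factors to the right terminates in an expression whose ``$\mY$-part'' is genuinely an $\F_p[\oX^{\pi(H)}]$-combination of the $y_i^*$ (rather than requiring monomials $\oX^{\ba}$ with $\ba \notin \pi(H)$). This is essentially careful bookkeeping with the semidirect-product commutation; once it is done, both the generating statement and the effective module presentation follow formally, the former from the Hermite normal form computation and the latter from Lemma~\ref{lem:sgmod}.
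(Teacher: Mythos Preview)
The paper does not supply its own proof of this lemma; it is quoted from Baumslag--Cannonito--Robinson--Segal and Romanovski\u{\i}. Your overall strategy is the right one, but the claimed generating set $\{y_1^*,\ldots,y_k^*\}$ for $\mY\cap H$ is too small, and the step ``yields $y = w\cdot h_1^{N_1}\cdots h_d^{N_d}$'' is where the argument breaks.

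The rewriting rule $h_j\,(y',\bzer)=(\oX^{\ba_j}y',\bzer)\,h_j$ only moves $h_j$-factors past elements of $\mY$; it does \emph{not} reorder the $h_j$'s among themselves. After the rewriting you obtain $y=w\cdot W$ with $W$ an arbitrary word in $h_1^{\pm1},\ldots,h_d^{\pm1}$ satisfying $\pi(W)=\bzer$, not a product in the fixed order $h_1^{N_1}\cdots h_d^{N_d}$. Such a $W$ is typically a nonzero element of $\mY$ that is \emph{not} an $\F_p[\oX^{\pi(H)}]$-combination of the $y_i^*$. For a concrete failure take $k=d$ with $\pi(g_1),\ldots,\pi(g_d)$ already a $\Z$-basis of $\pi(H)$: then $h_j=g_j$, every $y_i^*$ is trivial, yet $\mY\cap H$ contains the commutators $[g_i,g_j]$, which are nonzero whenever the $g_i$ do not commute (e.g.\ $g_1=(1,(1,0))$, $g_2=(1,(0,1))$ in $\F_p[X_1^{\pm},X_2^{\pm}]\rtimes\Z^2$ gives $[g_1,g_2]=(X_1-X_2,\bzer)\neq 0$). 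The repair is to adjoin the finitely many commutators $[h_i,h_j]$, $i<j$, to your generating set: writing $H_0=\gen{h_1,\ldots,h_d}$, one has $\mY\cap H_0=[H_0,H_0]$ (the abelianisation of $H_0$ is generated by $d$ elements and surjects onto $\Z^d$, hence equals $\Z^d$), and $[H_0,H_0]$ is exactly the $\F_p[\oX^{\pi(H)}]$-module generated by the basic commutators since conjugation by $H_0$ acts through $\F_p[\oX^{\pi(H)}]$. With this enlarged finite set your normal-form argument and the appeal to Lemma~\ref{lem:sgmod} go through.
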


\renewcommand\thmcontinues[1]{continued}
\begin{exmpl}[continues=exa:cont]
As in Example~\ref{expl:running}, let $\mY = \F_2[X_1^{\pm}, X_2^{\pm}]$.
Consider the subgroup $H \leq \mY \rtimes \Z^2$ generated by $g_1 = \big(1 + X_2, \, (2, 0)\big)$ and $g_2 = \big(1, \, (- 2, 0)\big)$.
Then $\pi(H)$ is the subgroup of $\Z^2$ generated by $\ba_1 \coloneqq (2, 0)$, it has rank $d = 1$. 
We can choose $h_1 \coloneqq g_1$ in Lemma~\ref{lem:structmeta}.

The subgroup $\mY \cap H$ is an $\F_p[\oX^{\pi(H)}] = \F_p[X_1^2, X_1^{-2}]$-module. We have $g_1 g_2 = \big(1 + X_2 + X_1^2, \bzer \big)$.
We claim that $1 + X_2 + X_1^2$ is the generator of the $\F_p[X_1^2, X_1^{-2}]$-module $\mY \cap H$:
that is, $\mY \cap H = \{F \cdot (1 + X_2 + X_1^2) \mid F \in \F_p[X_1^2, X_1^{-2}]\}$. 
Indeed, recall the formula $(f, \ba)(y, \bzer)(f, \ba)^{-1} = (\oX^{\ba} \cdot y, \bzer)$.
For each $m \in \Z$, we have $g_1^m (g_1 g_2) g_1^{-m} = g_1^m \big(1 + X_2 + X_1^2, \bzer\big) g_1^{-m} = \big(X_1^{2m} \cdot (1 + X_2 + X_1^2), \bzer\big)$.
So $X_1^{2m} \cdot (1 + X_2 + X_1^2) \in \mY \cap H$ for all $m \in \Z$.
Consequently, $F \cdot (1 + X_2 + X_1^2) \in \mY \cap H$ for all $F \in \F_p[X_1^2, X_1^{-2}]$.
It is not difficult to show that $\{F \cdot (1 + X_2 + X_1^2) \mid F \in \F_p[X_1^2, X_1^{-2}]\}$ is indeed all the elements in $\mY \cap H$, but we will omit the details.
Note that $\mY \cap H$ is finitely generated as an $\F_p[X_1^2, X_1^{-2}]$-module but not finitely generated \emph{as an abelian group}.

Finally, for any element $h = (f, 2z) \in H$, we have $\pi(h_1^{-z} h) = \bzer$, so $(y, \bzer) \coloneqq h_1^{-z} h \in \mY \cap H$. 
Consequently, $h$ can actually be written in a ``canonical form'' $h_1^z \, (y, \bzer)$, where $z \in \Z$ and $y \in \mY \cap H$.
Such canonical forms will be extensively used below.
\hfill $\blacksquare$
\end{exmpl}

%Define the monomials $Y_1 = \oX^{a_1}, Y_2 = \oX^{a_2} \ldots, Y_N = \oX^{a_N}$ in $\F_p[\oX]$, then writing $\oY = (Y_1, \ldots, Y_N)$, the polynomial ring $\F_p[\oY]$ is the subring $\F_p[\oX^{\pi(H)}]$ of $\F_p[\oX]$.
Define
\[
\mY_i \coloneqq \mY \cap \left(\leftindex^{q_i}{H}\right), \quad i = 1, \ldots, k.
\]
Note that for any $q \in G$, we have $\pi(\leftindex^{q}{H}) = \pi(H)$ because $\Z^n$ is abelian. Therefore $\pi(\leftindex^{q_1}{H}) = \cdots = \pi(\leftindex^{q_k}{H}) = \pi(H)$.
So by Lemma~\ref{lem:structmeta}, all the $\mY_i$'s are finitely generated $\F_p[\oX^{\pi(H)}]$-modules.
Let $\ba_1, \ldots, \ba_{d} \in \Z^n$ be a $\Z$-basis of $\pi(H)$.
For each $1 \leq i \leq k$, choose elements $h_{i,1}, \ldots, h_{i,d} \in \leftindex^{q_i}{H}$ so that $\pi(h_{i,1}) = \ba_1, \ldots, \pi(h_{i,d}) = \ba_d$.
Then, $\leftindex^{q_i}{H}$ is generated by $h_{i,1}, \ldots, h_{i,d}$ and $\mY_i$.

Fix any $i$. Since $\ba_1, \ldots, \ba_{d} \in \Z^n$ form a $\Z$-basis for $\pi(\leftindex^{q_i}{H})$, for each element $h \in \leftindex^{q_i}{H}$ we can find integers $z_1, \ldots, z_d$ such that 
\[
\pi(h) = z_1 \ba_1 + z_2 \ba_2 + \cdots + z_d \ba_d = z_1 \pi(h_{i,1}) + z_2 \pi(h_{i,2}) + \cdots + z_d \pi(h_{i,d}).
\]
This shows $\pi(h_{i, d}^{- z_{i, d}} \cdots h_{i, 2}^{- z_{i, 2}} h_{i, 1}^{- z_{i, 1}} h) = \bzer$, so $h_{i, d}^{- z_{i, d}} \cdots h_{i, 2}^{- z_{i, 2}} h_{i, 1}^{- z_{i, 1}} h \in \ker(\pi) \cap \leftindex^{q_i}{H} = \mY_i$.
Consequently, $h$ is equal to $h_{i,1}^{z_{i, 1}} h_{i,2}^{z_{i, 2}} \cdots h_{i,d}^{z_{i, d}} (y_i, \bzer)$ for some $y_i \in \mY_i$.

The form of this product of powers gives us the motivation to reduce membership in group products to the Knapsack Problem. 
Denote
\[
\tY \coloneqq \mY_1 + \cdots + \mY_k.
\]
Again we identify $\tY$ with the subgroup $\tY \times \{\bzer\}$ of $\mY \rtimes \Z^n$. We show the following:

\begin{lem}\label{lem:SMtoKP}
    We have $g \in \leftindex^{q_1}{H} \cdot \leftindex^{q_2}{H} \cdot \ldots \cdot \leftindex^{q_{k}}{H}$, if and only if there exist integers $z_{1, 1}, \ldots, z_{1, d}$, $\ldots$, $z_{k, 1}, \ldots, z_{k, d}$, such that
    \begin{equation}\label{eq:SMtoKP}
        \left(h_{1,1}^{z_{1, 1}} h_{1,2}^{z_{1, 2}} \cdots h_{1,d}^{z_{1, d}}\right) \cdots \left(h_{k,1}^{z_{k, 1}} h_{k,2}^{z_{k, 2}} \cdots h_{k,d}^{z_{k, d}}\right) \cdot g^{-1} \in \tY.
    \end{equation}
\end{lem}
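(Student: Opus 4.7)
The plan is to prove both implications directly, using three ingredients: (i) the normal form for elements of $\leftindex^{q_i}{H}$ established just before the lemma (every such element has the shape $h_{i,1}^{z_{i,1}}\cdots h_{i,d}^{z_{i,d}}\cdot (y,\bzer)$ with $y\in\mY_i$); (ii) the conjugation formula $w(y,\bzer)w^{-1}=(\oX^{\pi(w)}\cdot y,\bzer)$, immediate from~\eqref{eq:defsemi2}; and (iii) the fact, via Lemma~\ref{lem:structmeta}, that each $\mY_i$ is an $\F_p[\oX^{\pi(H)}]$-module, hence closed under multiplication by $\oX^{\bb}$ for every $\bb\in\pi(H)$. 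Since $\pi(h_{i,j})=\ba_j\in\pi(H)$, ingredient (iii) will apply to every monomial that appears below.

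For the ``only if'' direction, I would start from a factorization $g=g_1'g_2'\cdots g_k'$ with $g_i'\in\leftindex^{q_i}{H}$ and, using (i), write $g_i'=P_i\cdot(y_i,\bzer)$ where $P_i:=h_{i,1}^{z_{i,1}}\cdots h_{i,d}^{z_{i,d}}$ and $y_i\in\mY_i$. I would then commute each $(y_i,\bzer)$ rightward past $P_{i+1}\cdots P_k$ via (ii); each move replaces $y_i$ by $\oX^{-\pi(P_{i+1}\cdots P_k)}\cdot y_i$, which still lies in $\mY_i$ by (iii). Collecting yields $g=P\cdot(y,\bzer)$ with $P:=P_1\cdots P_k$ and $y\in\tY$, whence one final application of (ii) gives $Pg^{-1}=(-\oX^{\pi(P)}\cdot y,\bzer)$, and (iii) puts this element in $\tY$, as required.

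For the converse, I would assume~\eqref{eq:SMtoKP} holds for some integers $z_{i,j}$, set $P_i$ as above, and write $Pg^{-1}=(\tilde y,\bzer)$ with $\tilde y\in\tY$. Decomposing $\tilde y=y_1'+\cdots+y_k'$ with $y_i'\in\mY_i$, the goal is to produce $(w_i,\bzer)\in\mY_i$ so that $g=\prod_{i=1}^k P_i(w_i,\bzer)$, which would place $g$ in $\leftindex^{q_1}{H}\cdot\ldots\cdot\leftindex^{q_k}{H}$. Expanding this product and pushing each $(w_i,\bzer)$ rightward exactly as in the forward direction reduces the identity to the single equation $\sum_i \oX^{-\pi(P_{i+1}\cdots P_k)}\cdot w_i=-\oX^{-\pi(P)}\cdot\tilde y$ in $\mY$, which is solved termwise by $w_i:=-\oX^{-\pi(P_1\cdots P_i)}\cdot y_i'$, an element of $\mY_i$ by (iii).

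I do not foresee a serious obstacle: the argument amounts to bookkeeping around the conjugation formula, and the only point requiring care is that every monomial exponent arising lies in $\pi(H)$, which is automatic from the choice of the $h_{i,j}$. The only conceptual input, namely that $\mY_i$ is an $\F_p[\oX^{\pi(H)}]$-module, is already supplied by Lemma~\ref{lem:structmeta}.
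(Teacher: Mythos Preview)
Your proposal is correct and follows essentially the same approach as the paper: both directions use the conjugation formula to shuffle the $(y_i,\bzer)$ factors past the $P_j$'s, relying on the fact that each $\mY_i$ is an $\F_p[\oX^{\pi(H)}]$-module so that the resulting elements stay in $\mY_i$. The only cosmetic difference is that the paper commutes the $(y_i,\bzer)$'s \emph{leftward} to the front (so that $Pg^{-1}$ drops out immediately without an extra conjugation), whereas you push them \emph{rightward} and then conjugate once more by $P$; this is an inessential bookkeeping choice.
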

\begin{proof}
    For the ``only if'' part, suppose $g \in \leftindex^{q_1}{H} \cdot \leftindex^{q_2}{H} \cdot \ldots \cdot \leftindex^{q_{k}}{H}$, and write $g = h_1 h_2 \cdots h_k$ with $h_i \in \leftindex^{q_i}{H}$ for each $1 \leq i \leq k$.
    As shown above, we can write
    \[
    h_i = h_{i,1}^{z_{i, 1}} h_{i,2}^{z_{i, 2}} \cdots h_{i,d}^{z_{i, d}} (y_i, \bzer)
    \]
    for some $y_i \in \mY_i, \, i = 1, \ldots, k$.
    Denote $y_i' \coloneqq \oX^{\sum_{j = 1}^d z_{1, j} \ba_j + \sum_{j = 1}^d z_{2, j} \ba_j + \cdots + \sum_{j = 1}^d z_{i, j} \ba_j} \cdot y_i$, then $y_i' \in \mY_i$.
    Recall the formula $(f, \ba)(y, \bzer)(f, \ba)^{-1} = (\oX^{\ba} \cdot y, \bzer)$, that is $(f, \ba)(y_i, \bzer) = (\oX^{\ba} \cdot y_i, \bzer) (f, \ba)$. Therefore,
    \begin{align*}
        g & = h_1 h_2 \cdots h_k 
        = \left(h_{1,1}^{z_{1, 1}} h_{1,2}^{z_{1, 2}} \cdots h_{1,d}^{z_{1, d}}\right) (y_1, \bzer) \cdots \left(h_{k,1}^{z_{k, 1}} h_{k,2}^{z_{k, 2}} \cdots h_{k,d}^{z_{k, d}}\right) (y_k, \bzer) \\
        & = (y_k', \bzer) \cdots (y_2', \bzer) (y_1', \bzer) \left(h_{1,1}^{z_{1, 1}} h_{1,2}^{z_{1, 2}} \cdots h_{1,d}^{z_{1, d}}\right) \cdots \left(h_{k,1}^{z_{k, 1}} h_{k,2}^{z_{k, 2}} \cdots h_{k,d}^{z_{k, d}}\right)
    \end{align*}
    Therefore 
    \[
    \left(h_{1,1}^{z_{1, 1}} h_{1,2}^{z_{1, 2}} \cdots h_{1,d}^{z_{1, d}}\right) \cdots \left(h_{k,1}^{z_{k, 1}} h_{k,2}^{z_{k, 2}} \cdots h_{k,d}^{z_{k, d}}\right) \cdot g^{-1} = (- y_k' - \cdots - y_2' - y_1', \bzer) \in \mY_1 + \cdots + \mY_k = \tY.
    \]

    For the ``if'' part, let $z_{1, 1}, \ldots, z_{1, d}$, $\ldots$, $z_{k, 1}, \ldots, z_{k, d}$ be integers that satisfy Equation~\eqref{eq:SMtoKP}. Since $\tY \coloneqq \mY_1 + \cdots + \mY_k$, there exist $y_1 \in \mY_1, \ldots, y_k \in \mY_k$, such that
    \[
    \left(h_{1,1}^{z_{1, 1}} h_{1,2}^{z_{1, 2}} \cdots h_{1,d}^{z_{1, d}}\right) \cdots \left(h_{k,1}^{z_{k, 1}} h_{k,2}^{z_{k, 2}} \cdots h_{k,d}^{z_{k, d}}\right) \cdot g^{-1} = (y_1 + y_2 + \cdots + y_k, \bzer).
    \]
    Denote $y_i' \coloneqq - \oX^{- \sum_{j = 1}^d z_{1, j} \ba_j - \sum_{j = 1}^d z_{2, j} \ba_j - \cdots - \sum_{j = 1}^d z_{i, j} \ba_j} \cdot y_i $, then $y_i' \in \mY_i$. 
    We have
    \begin{align*}
        g & = (- y_1 - y_2 - \cdots - y_k, \bzer) \left(h_{1,1}^{z_{1, 1}} h_{1,2}^{z_{1, 2}} \cdots h_{1,d}^{z_{1, d}}\right) \cdots \left(h_{k,1}^{z_{k, 1}} h_{k,2}^{z_{k, 2}} \cdots h_{k,d}^{z_{k, d}}\right) \\
        & = (- y_k, \bzer) \cdots (- y_2, \bzer) (- y_1, \bzer) \left(h_{1,1}^{z_{1, 1}} h_{1,2}^{z_{1, 2}} \cdots h_{1,d}^{z_{1, d}}\right) \cdots \left(h_{k,1}^{z_{k, 1}} h_{k,2}^{z_{k, 2}} \cdots h_{k,d}^{z_{k, d}}\right) \\
        & = \left(h_{1,1}^{z_{1, 1}} h_{1,2}^{z_{1, 2}} \cdots h_{1,d}^{z_{1, d}}\right) (y_1', \bzer) \cdots \left(h_{k,1}^{z_{k, 1}} h_{k,2}^{z_{k, 2}} \cdots h_{k,d}^{z_{k, d}}\right) (y_k', \bzer).
    \end{align*}
    For each $i$, we have $(y_i', \bzer) \in \mY_i \leq \leftindex^{q_i}{H}$, so $\left(h_{i,1}^{z_{i, 1}} h_{i,2}^{z_{i, 2}} \cdots h_{i,d}^{z_{i, d}}\right) (y_i', \bzer) \in \leftindex^{q_i}{H}$. Therefore $g \in \leftindex^{q_1}{H} \cdot \leftindex^{q_2}{H} \cdot \ldots \cdot \leftindex^{q_{k}}{H}$.
\end{proof}

Proposition~\ref{prop:GPtoKP} then follows from Lemma~\ref{lem:SMtoKP}:
\begin{proof}[Proof of Proposition~\ref{prop:GPtoKP}]
By Lemma~\ref{lem:SMtoKP}, we have $g \in \leftindex^{q_1}{H} \cdot \leftindex^{q_2}{H} \cdot \ldots \cdot \leftindex^{q_{k}}{H}$ if and only if Equation~\eqref{eq:SMtoKP} has integer solutions.
Write $h_{i, j}$ as $(f_{i, j}, \ba_j)$ for $i = 1, \ldots, k; \, j = 1, \ldots, d$, and write $g$ as $(f_0, \ba_0)$.
Let $\mF$ denote the $\F_p[\oX^{\pi(H)}]$-submodule of $\mY$ generated by $f_0$ and $f_{i, j}, \, i = 1, \ldots, k; \, j = 1, \ldots, d$.
Notably, $\mF$ contains $\mY_i, \, i = 1, \ldots, k$.
A finite presentation of $\mF$ can be computed (see Lemma~\ref{lem:sgmod}).
The group generated by $g$ and all the $h_{i, j}$'s is contained in the subgroup $\mF \rtimes \pi(H)$ of $\mY \rtimes \Z^n$.
Hence, Equation~\eqref{eq:SMtoKP} is equivalent to the following equation in $(\mF/\tY) \rtimes \pi(H)$:
\begin{equation}\label{eq:blockKP}
        \left(h_{1,1}^{z_{1, 1}} h_{1,2}^{z_{1, 2}} \cdots h_{1,d}^{z_{1, d}}\right) \cdots \left(h_{k,1}^{z_{k, 1}} h_{k,2}^{z_{k, 2}} \cdots h_{k,d}^{z_{k, d}}\right) = g.
\end{equation}
Choose the new variables $X_1' \coloneqq \oX^{\ba_1}, \ldots, X_d' \coloneqq \oX^{\ba_d}$.
Then the ring $\F_p[\oX^{\pi(H)}]$ can be written as the polynomial ring $\F_p[{X_1'}^{\pm}, \ldots, {X_d'}^{\pm}]$, and $\mY' \coloneqq \mF/\tY$ is now a finitely presented $\F_p[{X_1'}^{\pm}, \ldots, {X_d'}^{\pm}]$-module.
We conclude by letting $G' \coloneqq \mY' \rtimes \Z^d = (\mF/\tY) \rtimes \pi(H)$, and observing that Equation~\eqref{eq:blockKP} is a Knapsack equation of the form~\eqref{eq:KP2}.
\end{proof}

\begin{rmk}\label{rmk:noteasy}
Note that even if we only considered the lamplighter group $G = (\Z/p\Z) \wr \Z^n$ by supposing $\mY = \F_p[\oX]$, we would inevitably introduce the quotient by $\tY$ during our reduction from Submonoid Membership to the Knapsack Problem. Therefore, it is reasonable to directly consider Submonoid Membership in the more general case of semidirect product $\mY \rtimes \Z^n$ with \emph{finitely presented} $\mY$, instead of only the special case of lamplighter groups $(\Z/p\Z) \wr \Z^n$. In other words, it does not appear that the special case of lamplighter groups admits a more direct solution.
\end{rmk}

%In the next step, we will solve equations of the type~\eqref{eq:KP2}.
%Such equations are usually called \emph{Knapsack Equations in groups}.
%Without loss of generality, we will now consider the group $G = \mF/(\mF \cap \tY) \rtimes \pi(H)$ as the ambient group. We again let $\mY$ denote $\mF/(\mF \cap \tY)$ and replace $n$ with $d$, so we can again write $G = \mY \rtimes \Z^n$ where $\mY$ is a finitely generated $\F_p[\oX^{\pm}]$-module.

\paragraph*{Step 3: from Knapsack Problem to S-unit equation}

The goal of this step is to solve the Knapsack Problem in groups of the form $\mY \rtimes \Z^n$. Namely, assuming a theorem on S-unit equations (Theorem~\ref{thm:Sunit}), we will prove the following result:

\thmKP*

Before we proceed to the formal proof, let us show another example to illustrate the ideas. 

\begin{exmpl}\label{expl:KPtoSunit}
    Let $p = 2$, $n = 2$.
    Let $\mY = \F_2[X_1^{\pm}, X_2^{\pm}]/(X_1+X_2+1)$, that is, the quotient of $\F_2[X_1^{\pm}, X_2^{\pm}]$ by the submodule $\F_2[X_1^{\pm}, X_2^{\pm}] \cdot (X_1+X_2+1)$.    
    Let $h_1 = \big(0, (1, 0)\big), h_2 = \big(1 - X_2, (0, 1)\big), h_3 = \big(0, (1, 0)\big), h_4 = \big(0, (0, 1)\big), g = \big(1, (0, 0)\big)$ be elements of $\mY \rtimes \Z^2$. We will to show that the solution set of the equation
    \begin{equation}\label{eq:exampleKP}
        h_1^{n_1} h_2^{n_2} h_3^{n_3} h_4^{n_4} = g
    \end{equation}
    is 2-automatic.

    By direct computation, we have $(f, \ba)^z = \big((1 + \oX^{\ba} + \cdots + \oX^{(z-1) \ba}) \cdot f, z \ba\big)$ for $z > 0$ and $(f, \ba)^z = \big((- \oX^{z \ba} - \oX^{(z+1) \ba} - \cdots - \oX^{-\ba}) \cdot f, z \ba\big)$ for $z < 0$.
    In either case (and for $z = 0$), we have
    \[
    (f, \ba)^z = \left( \frac{1 - \oX^{z \ba}}{1 - \oX^{\ba}} \cdot f, \; z \ba \right).
    \]
    Hence,
    \begin{align*}
        h_1^{n_1} h_2^{n_2} h_3^{n_3} h_4^{n_4} & = \big(0, (1, 0)\big)^{n_1} \big(1 - X_2, (0, 1)\big)^{n_2} \big(0, (1, 0)\big)^{n_3} \big(0, (0, 1)\big)^{n_4} \\
        & = \big(0, (n_1, 0)\big) \left(\frac{1 - X_2^{n_2}}{1 - X_2} \cdot (1 - X_2), (0, n_2) \right) \big(0, (n_3, 0)\big) \big(0, (0, n_4)\big) \\
        & = \big(X_1^{n_1} (1 - X_2^{n_2}), (n_1 + n_3, n_2 + n_4) \big)
    \end{align*}
    Since $g = \big(1, (0, 0)\big)$, we have $h_1^{n_1} h_2^{n_2} h_3^{n_3} h_4^{n_4} = g$ if and only if
    \[
    X_1^{n_1} (1 - X_2^{n_2}) = 1, \; n_1 + n_3 = 0, \; n_2 + n_4 = 0.
    \]
    We now show that the solution set $(n_1, n_2, n_3, n_4) \in \Z^4$ is 2-automatic. Since $n_3 = - n_1$ and $n_4 = - n_2$, it suffices to show that the solution set $(n_1, n_2) \in \Z^2$ of $X_1^{n_1} (1 - X_2^{n_2}) = 1$ is 2-automatic.

    The equation $X_1^{n_1} (1 - X_2^{n_2}) = 1$ can be rewritten as 
    \begin{equation}\label{eq:exampleSunit}
        X_1^{- n_1} + X_2^{n_2} = 1
    \end{equation}
    Therefore, we have reduced the \emph{Knapsack equation}~\eqref{eq:exampleKP} to the \emph{S-unit equation}~\eqref{eq:exampleSunit} over the module $\F_2[X_1^{\pm}, X_2^{\pm}]/(X_1+X_2+1)$.
    The 2-automaticity of its solution set can already be deduced from Theorem~\ref{thm:Sunit} (see below).
    However, we now prove ``by hand'' that the solution set of the S-unit Equation~\eqref{eq:exampleSunit} is 2-automatic, in order to give an idea of how 2-automaticity can arise from such equations.

    Since we work in the module $\F_2[X_1^{\pm}, X_2^{\pm}]/(X_1+X_2+1)$, we have $X_2 = - (X_1 + 1) = X_1 + 1$ (we are in characteristic 2).
    Therefore, Equation~\eqref{eq:exampleSunit} is equivalent to $(X_1 + 1)^{n_2} = X_1^{- n_1} + 1$.
    We claim that its solution set is $(n_1, n_2) \in \{(-2^k, 2^k) \mid k \in \N\}$.
    
    Indeed, since $X_1^{- n_1} + 1 \in \F_2[X_1^{\pm}]$, we must have $n_2 \geq 0$ so that $(X_1 + 1)^{n_2}$ is also in $\F_2[X_1^{\pm}]$. Considering the degree of both sides we have $n_1 = -n_2$. Now, in order for $(X_1 + 1)^{n_2} = X_1^{n_2} + 1$ to hold, $n_2$ must be a power of 2. 
    Indeed, $n_2 = 0$ is not a solution, so suppose $n_2 \geq 1$ and write $n_2 = 2^k + a$ with $0 \leq a < 2^k$.
    By ``freshman's dream'', we have
    \[
    (X_1 + 1)^{n_2} = (X_1 + 1)^{2^k} (X_1 + 1)^a = (X_1^{2^k} + 1) (X_1 + 1)^a = X_1^{2^k} (X_1 + 1)^a + (X_1 + 1)^a.
    \]
    Every monomial appearing in $X_1^{2^k} (X_1 + 1)^a$ has degree larger than every monomial appearing $(X_1 + 1)^a$. But $(X_1 + 1)^{n_2} = X_1^{n_2} + 1$ has only two monomials.
    So both $X_1^{2^k} (X_1 + 1)^a$ and $(X_1 + 1)^a$ must be monomials, meaning $a = 0$.
    Therefore $n_2 = 2^k$, and $n_1 = - n_2 = - 2^k$.

    In conclusion, the solution set of Equation~\eqref{eq:exampleKP} is $(n_1, n_2, n_3, n_4) \in \{(-2^k, 2^k, 2^k, - 2^k) \mid k \in \N\}$, which is 2-automatic. 
    Note that this set is not semilinear, so the situation in semidirect products is more complicated than in the wreath product of two abelian groups~\cite{DBLP:conf/stacs/GanardiKLZ18}.
    \hfill $\blacksquare$
\end{exmpl}

Example~\ref{expl:KPtoSunit} shows that there will be two parts in the proof of Theorem~\ref{thm:KP}: a (fairly elementary) first part reducing Knapsack Equations to S-unit equations, then a (rather non-trivial) second part showing that S-unit equations have $p$-automatic solution sets.
The second part is summarized as Theorem~\ref{thm:Sunit} below, which we will prove in Section~\ref{sec:Sunit}. In the rest of this section, we will focus on the first part of the proof, assuming Theorem~\ref{thm:Sunit} as a blackbox.

\thmSunit*

\begin{proof}[Proof of Theorem~\ref{thm:KP} (assuming Theorem~\ref{thm:Sunit})]
    Write $h_i = (f_i, \ba_i), \, i = 1, \ldots, m$, and $g = (f, \bb)$.
    
    \textbf{Case 1: none of the $\ba_i$ is $\bzer$} (in fact this case already suffices to solve the Equation~\eqref{eq:KP} arising from the previous step).
    We have
    \[
    h_i^{n_i} = (f_i, \ba_i)^{n_i} = \left( \frac{1 - \oX^{n_i \ba_i}}{1 - \oX^{\ba_i}} \cdot f_i, \; n_i \ba_i \right).
    \]
    Computing their product gives
    \[
    h_1^{n_1} h_2^{n_2} \cdots h_m^{n_m} = \left( \sum_{i = 1}^m \oX^{n_1 \ba_1 + \cdots + n_{i-1} \ba_{i-1}} \cdot \frac{1 - \oX^{n_i \ba_i}}{1 - \oX^{\ba_i}} \cdot f_i,\; n_1 \ba_1 + \cdots + n_m \ba_m \right).
    \]
    Therefore, writing $\bz_i \coloneqq n_1 \ba_1 + \cdots + n_{i} \ba_{i}$ for $i = 1, \ldots, m$, and $\bz_0 \coloneqq \bzer$, we have $h_1^{n_1} h_2^{n_2} \cdots h_m^{n_m} = g$ if and only if
    \begin{align}
        \sum_{i = 1}^m \oX^{\bz_{i-1}} \cdot \frac{1 - \oX^{\bz_i - \bz_{i-1}}}{1 - \oX^{\ba_i}} \cdot f_i & = f, \label{eq:sumraw} \\
        \text{and} \quad \bz_m & = \bb. \label{eq:sumlin}
    \end{align}    
    Write $\mY = \F_p[\oX^{\pm}]^d/N$ for some $N \subseteq \F_p[\oX^{\pm}]^d$.
    Considering $f_1, \ldots, f_m, f,$ as elements in $\F_p[\oX^{\pm}]^d$, Equation~\eqref{eq:sumraw} is equivalent to
    \[
    \sum_{i = 1}^m (\oX^{\bz_{i-1}} - \oX^{\bz_i}) \prod_{j \neq i} (1 - \oX^{\ba_j}) \cdot f_i - \prod_{i=1}^{m}(1 - \oX^{\ba_i}) \cdot f \in \prod_{i=1}^{m}(1 - \oX^{\ba_i}) \cdot N.
    \]
    Letting $c_0 \coloneqq - \prod_{j \neq 1} (1 - \oX^{\ba_j}) \cdot f_1 + \prod_{i=1}^{m}(1 - \oX^{\ba_i}) \cdot f$, $\; c_i \coloneqq \prod_{j \neq (i+1)} (1 - \oX^{\ba_j}) \cdot f_{i+1} - \prod_{j \neq i} (1 - \oX^{\ba_j}) \cdot f_i, \; i = 1, \ldots, m-1$ and $c_m \coloneqq - \prod_{j \neq m} (1 - \oX^{\ba_j}) \cdot f_m$, the equation above is equivalent to the following equation in $M \coloneqq \F_p[\oX^{\pm}]^d/\prod_{i=1}^{m}(1 - \oX^{\ba_i})N$:
    \begin{equation}\label{eq:sumfinal}
        \oX^{\bz_1} \cdot c_1 + \cdots + \oX^{\bz_m} \cdot c_m = c_0.
    \end{equation}
    Therefore, we have $h_1^{n_1} h_2^{n_2} \cdots h_m^{n_m} = g$ if and only if $\bz_i \coloneqq n_1 \ba_1 + \cdots + n_{i} \ba_{i}, i = 1, \ldots, m$ satisfy Equations~\eqref{eq:sumlin} and \eqref{eq:sumfinal}. By Theorem~\ref{thm:Sunit}, the set $\mZ$ of $(\bz_1, \ldots, \bz_m) \in \Z^{mn}$ satisfying \eqref{eq:sumlin} and \eqref{eq:sumfinal} is $p$-automatic.
    Define the map $\varphi \colon \Z^m \rightarrow \Z^{mn}, (n_1, \ldots, n_m) \mapsto (n_1 \ba_1, n_1 \ba_1 + n_2 \ba_2, \ldots, n_1 \ba_1 + \cdots + n_m \ba_m)$. Then the solution set of $h_1^{n_1} h_2^{n_2} \cdots h_m^{n_m} = g$ is $\varphi^{-1}(\mZ)$. Since $\mZ$ is $p$-automatic and $\varphi$ is linear, $\varphi^{-1}(\mZ)$ is also $p$-automatic (see Lemma~\ref{lem:pauto}).

    \textbf{Case 2: some of the $\ba_i$'s are $\bzer$.} 
    We reduce this case to the previous case. Let $i_1, \ldots, i_s$ be the indices where $\ba_i = \bzer$.
    We rewrite the equation $h_1^{n_1} h_2^{n_2} \cdots h_m^{n_m} = g$ as 
    \[
    \left(h_1^{n_1} \cdots h_{i_1 - 1}^{n_{i_1 - 1}}\right) h_{i_1}^{n_{i_1}} \left(h_{i_1 + 1}^{n_{i_1 + 1}} \cdots h_{i_2 - 1}^{n_{i_2 - 1}} \right) h_{i_2}^{n_{i_2}} \cdots \left(h_{i_s + 1}^{n_{i_s + 1}} \cdots h_{m}^{n_{m}} \right) = g.
    \]
    Since $\ba_{i_1} = \cdots = \ba_{i_s} = \bzer$ and $p \cdot f_{i_1} = \cdots = p \cdot f_{i_s} = 0$, we have $h_{i_1}^p = \cdots = h_{i_s}^p = (0, \bzer)$. Therefore, the powers $h_{i_1}^{n_{i_1}}, h_{i_2}^{n_{i_2}}, \ldots,$ and $h_{i_s}^{n_{i_s}}$ can only take on finitely many different values (at most $p$ each). It suffices to show that for each value $h_{i_1}^{n_{i_1}} = g_1, \ldots, h_{i_s}^{n_{i_s}} = g_s$, the solution set of
    \[
    \left(h_1^{n_1} \cdots h_{i_1 - 1}^{n_{i_1 - 1}}\right) g_1 \left(h_{i_1 + 1}^{n_{i_1 + 1}} \cdots h_{i_2 - 1}^{n_{i_2 - 1}} \right) g_2 \cdots \left(h_{i_s + 1}^{n_{i_s + 1}} \cdots h_{m}^{n_{m}} \right) = g
    \]
    is effectively $p$-automatic (recall that a union of effectively $p$-automatic sets is effectively $p$-automatic, see Lemma~\ref{lem:pauto}).
    But the above equation is equivalent to the Knapsack equation
    \begin{multline*}
        h_1^{n_1} \cdots h_{i_1 - 1}^{n_{i_1 - 1}} \left(g_1 h_{i_1 + 1} g_1^{-1}\right)^{n_{i_1 + 1}} \cdots \left( g_1 h_{i_2 - 1} g_1^{-1}\right)^{n_{i_2 - 1}} \left(g_1 g_2 h_{i_2 + 1} g_2^{-1} g_1^{-1}\right)^{n_{i_2 + 1}} \cdots \\
        \cdots \left(g_1 g_2 \dots g_s h_{m} g_s^{-1} \cdots g_2^{-1} g_1^{-1} \right)^{n_{m}} = g g_s^{-1} \cdots g_2^{-1} g_1^{-1}.
    \end{multline*}
    By Case 1, the solution set of the above equation is indeed effectively $p$-automatic.
\end{proof}

Combining the three steps in this section, we have proven Theorem~\ref{thm:main}:

\begin{proof}[Proof of~Theorem~\ref{thm:main}]
    By Proposition~\ref{prop:SMtoGP}, Proposition~\ref{prop:GPtoKP} and Theorem~\ref{thm:KP}, Submonoid Membership in $\mY \rtimes \Z^n$ reduces to checking whether the solution set of the Knapsack Equation~\eqref{eq:KP} is empty. Since the solution set is effectively $p$-automatic, its emptiness is decidable (Lemma~\ref{lem:pauto}).
\end{proof}

\section{S-unit equation over modules}\label{sec:Sunit}

The purpose of this section is to prove Theorem~\ref{thm:Sunit} on S-unit equations over modules.

\thmSunit*

As a comparison, S-unit equations over \emph{fields} are an intensively studied area in number theory. In the positive characteristic case, Adamczewski and Bell gave the following result. (Independently, Derksen and Masser~\cite{derksen2012linear} also proved a similar result.)
\begin{thrm}[{\cite[Theorem~3.1]{adamczewski2012vanishing}}]\label{thm:Sunitexample}
    Let $K$ be a field of characteristic $p > 0$, let $c_1, \ldots , c_m \in K^*$, and let $\ox = (x_1, \ldots, x_n) \in \left(K^*\right)^n$. Then the set of solutions $(\bz_1, \ldots, \bz_m) \in \left(\Z^n\right)^m$ for the equation
    \begin{equation*}
        \ox^{\bz_1} c_1 + \cdots + \ox^{\bz_m} c_m = 1
    \end{equation*}
    is an effectively $p$-automatic subset of $\left(\Z^n\right)^m \cong \Z^{nm}$.
\end{thrm}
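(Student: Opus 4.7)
My plan is to follow a Frobenius-descent strategy in the spirit of Adamczewski and Bell that directly constructs a finite automaton recognizing the solution set. As a preliminary reduction, all parameters lie in a finitely generated subfield, so WLOG $K$ is finitely generated over $\F_p$, and the multiplicative group $\Gamma \coloneqq \langle x_1, \ldots, x_n \rangle \leq K^*$ is finitely generated of the form $T \times \Z^r$ with $T$ finite. Branching on the finitely many possibilities for the torsion part of each $\ox^{\bz_i}$ and using closure of $p$-automatic sets under union and linear pre-image (Lemma~\ref{lem:pauto}) reduces to the case where $\Gamma$ is free abelian and the map $\bz \mapsto \ox^{\bz}$ factors injectively through a known linear quotient. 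Standard branching on the signs of the entries of each $\bz_i$ further reduces to the case $\bz_i \in \N^n$.

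The core idea is Frobenius self-similarity. Writing each $\bz_i = \ba_i + p \bz_i'$ with $\ba_i \in \{0, 1, \ldots, p-1\}^n$ (low base-$p$ digit first), the equation $\sum_i \ox^{\bz_i} c_i = 1$ becomes $\sum_i (\ox^{\ba_i} c_i) (\ox^{\bz_i'})^p = 1$. Passing to the perfect closure $K^{1/p^{\infty}}$ and taking $p$-th roots gives a residual S-unit equation $\sum_i (\ox^{\ba_i} c_i)^{1/p} \ox^{\bz_i'} = 1$ of the same form. Iterating, the automaton state after consuming a prefix of the digit expansion of $(\bz_1, \ldots, \bz_m)$ is encoded by this residual tuple of constants; accepting states are those residual equations whose trivial solution $\bz_i' = \bzer$ is valid.

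The decisive step is showing that only finitely many residual tuples are reachable, so that the transition system is a finite automaton. The reachable constants live in a finitely generated multiplicative subgroup $\widetilde{\Gamma}$ of $(K^{1/p^{\infty}})^*$, and a Weil height on $\widetilde{\Gamma}$ (coming from places of the function field $K$) behaves contractively under the Frobenius step: each $p$-th root divides the height by $p$, while the multiplication by $\ox^{\ba_i}$ adds at most a fixed bounded amount depending only on $\max_i \|\ba_i\|_\infty \leq p-1$. The iteration therefore converges into a bounded-height region, and Northcott-type finiteness in function fields guarantees finitely many such bounded-height elements within $\widetilde{\Gamma}$ (after fixing a finite extension absorbing all reachable $p$-th roots).

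The main obstacle is precisely this height-boundedness and finiteness step. In characteristic zero, Evertse--Schlickewei--Schmidt forces S-unit solutions to be finite outright, but in characteristic $p$ the solution set is genuinely infinite, and one must instead exploit that Frobenius both creates new solutions and contracts heights; it is this dichotomy that produces the $p$-automatic rather than merely finite structure. Effectivity of the automaton follows by tracking explicit height bounds and explicit generators of $\widetilde{\Gamma}$ throughout the descent, so that the reachable state set and transition function can be enumerated algorithmically, closing the argument.
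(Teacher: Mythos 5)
Note first that the paper does not prove this statement: it is quoted verbatim from Adamczewski and Bell (Theorem~3.1 of~\cite{adamczewski2012vanishing}) and used as a black box, so there is no in-paper proof to compare your argument against. Your Frobenius-descent strategy is in any case not the route Adamczewski and Bell take — their proof goes through an effective form of the isotrivial Mordell--Lang theorem in positive characteristic; a direct Frobenius descent is closer in spirit to Derksen's proof of the positive-characteristic Skolem--Mahler--Lech theorem, which the present paper separately draws on in Section~\ref{sec:Sunit}.

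Independently of that, the finiteness step of your argument has a genuine gap. The height estimate $h(c^{(k+1)}) \leq \bigl(h(c^{(k)}) + B\bigr)/p$ is correct and does push heights into a bounded region, but the residual tuple after reading $k$ digits lives in $\Gamma^{1/p^k}$, and the union $\bigcup_{k \geq 0} \Gamma^{1/p^k}$ is not finitely generated — its bounded-height elements are infinite, so Northcott does not apply, and there is no finite extension that ``absorbs all reachable $p$-th roots.'' Concretely, take $p = 2$, $K = \F_2(X)$, $x_1 = X$, $x_2 = X + 1$, $c_1 = c_2 = 1$. The residual tuples $\bigl(X^{1/2^{j}}, (X+1)^{1/2^{j}}\bigr)$ for $j = 0, 1, 2, \ldots$ are all reachable (they arise on accepted paths padded with trailing zero digits), all accepting, pairwise distinct, with heights tending to $0$ and degrees over $K$ tending to infinity. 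So the raw residual tuple cannot serve as a finite state set. To repair the argument one must pass to states modulo Frobenius conjugation — noting that $\sum_i c_i \ox^{\bz_i'} = 1$ and $\sum_i c_i^{p} \ox^{\bw_i} = 1$ have solution sets related by the linear bijection $\bw_i = p\bz_i'$ — and then reprove finiteness for these equivalence classes; as written, your proposal does not establish that the transition system is finite.
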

Our objective is to generalize Theorem~\ref{thm:Sunitexample} from \emph{fields} to \emph{modules}. Notably, we need to replace the field $K$ by the polynomial ring $\F_p[\oX^{\pm}]$, and replace the elements $c_1, \ldots , c_m$ in $K^*$ by elements in the finitely presented $\F_p[\oX^{\pm}]$-module $M$. 
To give a general idea of our approach, consider the following example.

\begin{exmpl}[label=exa:Sunit]\label{expl:decomp}
    For simplicity, we present this example using regular polynomial rings instead of Laurent polynomial rings. Consider the equation
    \begin{equation}\label{eq:xayb}
        X^{a} + Y^{b} = 1
    \end{equation}
    in the $\F_2[X, Y]$-module $\F_2[X, Y]/Y^2(X+Y+1)$. Here, the variables are $a, b \in \N$.
    (Technically, in order to match the form of the equation in Theorem~\ref{thm:Sunit}, one needs to consider the equation $X^{a} Y^c + X^d Y^{b} = 1$ instead of~\eqref{eq:xayb}. However, to keep this example simple, we only consider the solutions with $c = d = 0$.)

    Equation~\eqref{eq:xayb} in $\F_2[X, Y]/Y^2(X+Y+1)$ is equivalent to $Y^2(X+Y+1) \mid X^{a} + Y^{b} + 1$ in $\F_2[X, Y]$, which in turn is equivalent to ``$Y^2 \mid X^{a} + Y^{b} + 1$ and $X+Y+1 \mid X^{a} + Y^{b} + 1$''.
    In other words, Equation~\eqref{eq:xayb} holds in $\F_2[X, Y]/Y^2(X+Y+1)$ if and only if it holds in both $\F_2[X, Y]/Y^2$ and in $\F_2[X, Y]/(X + Y + 1)$.

	The $\F_2[X, Y]$-module $\F_2[X, Y]/Y^2$ can be considered as a finitely generated $\F_2[X]$-module $\F_2[X, Y]/Y^2 = \F_2[X] + Y \cdot \F_2[X] \cong \F_2[X]^2$ by $1 \mapsto (1, 0), \; Y \mapsto (0, 1)$.
    Considered as an equation in $\F_2[X]^2$, the equation $X^{a} + Y^{b} = 1$ becomes 
\[
(X^a, 0) = (1, 0) \; \text{ if } \; b \geq 2, \quad
(X^a, 1) = (1, 0) \; \text{ if } \; b = 1, \quad
(X^a + 1, 0) = (1, 0) \; \text{ if } \; b = 0.
\]
	So the solution set of $X^{a} + Y^{b} = 1$ in $\F_2[X, Y]/Y^2$ is $\mZ_1 \coloneqq \{(a, b) \mid a = 0, \; b \geq 2\}$.
	
	The $\F_2[X, Y]$-module $\F_2[X, Y]/(X + Y + 1)$ can be considered as the $\F_2[X]$-module $\F_2[X]$ through the variable substitution $Y = X + 1$.
	Under this variable substitution, the equation $X^{a} + Y^{b} = 1$ becomes $X^{a} + (X+1)^b = 1$, so its solution set is $\mZ_2 \coloneqq \{(a, b) \mid a = b \in 2^{\N}\}$. This can be shown similar to Example~\ref{expl:KPtoSunit}, or by directly applying Theorem~\ref{thm:Sunitexample} on the quotient field $K = \F_2(X)$.
	
    Thus, the solution set of Equation~\eqref{eq:xayb} in $\F_2[X, Y]/Y^2(X+Y+1)$ is $\mZ_1 \cap \mZ_2 = \emptyset$.
    \hfill $\blacksquare$
\end{exmpl}

%The original proof of Theorem~\ref{thm:Sunitexample} relies on the fact that modules over fields are always free (they are linear spaces).
%One of the main difficulty of our generalization is that modules over polynomial rings are not always free.
Example~\ref{expl:decomp} illustrates the idea that, to solve an equation in a module $M$, we first ``decompose'' it into several equations in \emph{coprimary modules} $M_1, \ldots, M_l$ (such as $\F_2[X, Y]/Y^2$ and $\F_2[X, Y]/(X + Y + 1)$). This will be formalized in Lemma~\ref{lem:inter}.
Then, for the equation in each coprimary module, we ``eliminate'' variables to simplify the quotients (for example $\F_2[X, Y]/Y^2 \cong \F_2[X]^2$ and $\F_2[X, Y]/(X + Y + 1) \cong \F_2[X]$ both eliminate the variable $Y$). In general, variable elimination might not be directly possible, and we need to perform a ``change of variables'' using \emph{Noether Normalization}. This will be formalized in Lemma~\ref{lem:Noether} and Lemma~\ref{lem:eff}.
Finally, we show that each such equation ``without quotient'' (more precisely, in \emph{torsion-free} modules) has a $p$-automatic solution set, by reducing to the case of fields (for example, considering equations over $\F_2[X]$ as equations over the quotient field $\F_2(X)$). This will be formalized in Lemma~\ref{lem:primary}.
Thus, the solution set of the original equation over $M$ is an intersection of effectively $p$-automatic sets, and is therefore also effectively $p$-automatic.
This approach is inspired by Derkson's proof of the Skolem-Mahler-Lech theorem in modules over positive characteristic~\cite[Chapter~9]{derksen2007skolem}.

The starting point of our proof is the following deep theorem by Adamczewski and Bell, generalizing Theorem~\ref{thm:Sunitexample}.

\begin{thrm}[{\cite[Theorem~4.1]{adamczewski2012vanishing}}]\label{thm:Sunitfield}
    Let $K$ be a field of characteristic $p > 0$ and let $D$ be a positive integer. Let $V$ be a Zariski closed subset of $\GL_D(K)$ and let $A_1, \ldots, A_N$ be commuting matrices in $\GL_D(K)$. Then the set $\{(z_1, \ldots, z_N) \in \Z^N \mid A_1^{z_1} A_2^{z_2} \cdots A_n^{z_N} \in V \}$ is effectively $p$-automatic.
\end{thrm}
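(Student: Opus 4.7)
The plan is to reduce Theorem~\ref{thm:Sunitfield} to the S-unit equation result over fields (Theorem~\ref{thm:Sunitexample}) by exploiting the finite-dimensional commutative $K$-algebra generated by the $A_i$'s. Let $R \coloneqq K[A_1^{\pm 1}, \ldots, A_N^{\pm 1}]$, viewed as a $K$-subalgebra of $\mathrm{Mat}_D(K)$. Since $\mathrm{Mat}_D(K)$ is a finite-dimensional $K$-vector space, so is $R$; hence $R$ is a commutative Artinian $K$-algebra. The condition $A_1^{z_1} \cdots A_N^{z_N} \in V$ translates to a Zariski closed condition on the element $g(\bz) \coloneqq A_1^{z_1} \cdots A_N^{z_N} \in R$, i.e., a finite conjunction of polynomial equations $f_t(g(\bz)) = 0$ in the coordinates of $g(\bz)$ relative to a chosen $K$-basis of $R$. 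Since intersections of effectively $p$-automatic sets are effectively $p$-automatic (Lemma~\ref{lem:pauto}), it suffices to treat one such equation at a time.

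By the Artin structure theorem (primary decomposition of the zero ideal plus Chinese Remainder), $R \cong R_1 \times \cdots \times R_s$ where each $R_i$ is a local Artinian $K$-algebra with maximal ideal $\mathfrak{m}_i$ and residue field $k_i$. A single polynomial equation on $g(\bz)$ decomposes into equations on its images in each $R_i$. Fix one factor $R_i$. The image of $A_j$ in $R_i$ is a unit and can be written as $\lambda_j (1 + n_j)$, where $\lambda_j \in R_i^*$ lifts a nonzero element of $k_i$ and $n_j \in \mathfrak{m}_i$ is nilpotent; all these elements commute pairwise. Choose $e$ large enough so that $\mathfrak{m}_i^{p^e} = 0$. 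The freshman's dream gives $(1 + n_j)^{p^e} = 1 + n_j^{p^e} = 1$, so $\prod_j (1 + n_j)^{z_j}$ depends only on $(z_1, \ldots, z_N) \bmod p^e$. Partitioning $\Z^N$ by residues $\vec{r} \in (\Z/p^e\Z)^N$ (each class being trivially $p$-automatic), the unipotent contribution equals a fixed element $u_{\vec{r}} \in R_i$ on each class, and the surviving condition takes the form $f_t\bigl( \lambda_1^{z_1} \cdots \lambda_N^{z_N} \cdot u_{\vec{r}} \bigr) = 0$.

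Expanding this in a $K$-basis of $R_i$ yields a finite system of scalar equations each of the form $\sum_\alpha c_\alpha \mu_{\alpha,1}^{z_1} \cdots \mu_{\alpha,N}^{z_N} = 0$, where the $\mu_{\alpha,j}$ are monomials in the units $\lambda_j$. After normalising (dividing by a nonzero term to set the right-hand side to $1$), each equation is precisely an S-unit equation in the sense of Theorem~\ref{thm:Sunitexample} over the field $K$ (or a finite extension thereof containing the coefficients). Invoking Theorem~\ref{thm:Sunitexample} gives that the solution set in $\Z^N$ is effectively $p$-automatic. Intersecting over the defining equations $f_t$, the local factors $R_i$, and the residue classes $\vec{r}$, and applying the closure properties of Lemma~\ref{lem:pauto}, we conclude that $\{\bz \in \Z^N \mid A_1^{z_1} \cdots A_N^{z_N} \in V\}$ is effectively $p$-automatic.

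I expect the main obstacle to be effectivity: computing the primary decomposition of $R$, producing the lifts $\lambda_j$ and the nilpotents $n_j$, determining the nilpotence exponent $e$, and converting the polynomial conditions $f_t = 0$ into S-unit form must all be carried out algorithmically. Each of these is a standard task in computational commutative algebra over the field $K$ (which is assumed to permit effective computation, as required for the conclusion of Theorem~\ref{thm:Sunitexample} to be effective). A secondary subtlety is the handling of negative exponents $z_j < 0$, but this is accommodated automatically since the $A_j$ are invertible and the $p$-automatic encoding $w(\cdot)$ from the preliminaries already covers negative integers.
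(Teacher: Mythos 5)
The paper does not prove this statement; it is cited verbatim as Theorem~4.1 of Adamczewski and Bell~\cite{adamczewski2012vanishing} and used as a black box (the paper only derives Corollary~\ref{cor:MordellLang} from it). So there is no internal proof to compare your attempt against. Your outline — pass to the finite-dimensional commutative algebra $R = K[A_1^{\pm},\ldots,A_N^{\pm}]$, primary-decompose into local Artinian factors $R_i$, split each unit as semisimple times unipotent, and kill the unipotent part by restricting to residue classes modulo $p^e$ via the freshman's dream — is indeed the natural way to try to pull Theorem~4.1 back to Theorem~3.1.

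However, the factorisation $A_j = \lambda_j(1+n_j)$ as you state it leaves a genuine gap. You only require $\lambda_j \in R_i^*$ to \emph{lift} $\bar A_j \in k_i^*$, but such a lift is far from unique and a generic one is still a full-blown element of the local ring, not of a field. Concretely, take $R_i = K[T]/T^2$ (residue field $k_i = K$), $A = c+T$ with $c\in K^*$; then $\lambda = c+T$, $n=0$ satisfies your conditions, yet $\lambda^z = c^z(1+zc^{-1}T)$ has $K$-coordinates $(c^z, zc^{z-1})$, with a polynomial factor $z$ that survives the mod-$p^e$ reduction and is \emph{not} of S-unit form. For the reduction to work, $\lambda_j$ must be taken in a \emph{coefficient field} $k_i \hookrightarrow R_i$ (in the example, $\lambda=c\in K\subset R_i$), so that all unipotent content is forced into the factor $(1+n_j)$; equivalently, one should set $\lambda_j^{p^e}=A_j^{p^e}\in R_i^{p^e}$, which is a subfield. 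Existence, uniqueness, and computability of such coefficient fields is genuinely delicate when $K$ is not perfect — and the paper's own application invokes this theorem with $K = \K(Y_1,\ldots,Y_s)$, which is not perfect for $s\geq 1$, so the residue extensions $k_i/K$ can be inseparable. Finally, even once $\lambda_j \in k_i$, your claim that "expanding in a $K$-basis of $R_i$ yields scalar equations $\sum_\alpha c_\alpha \mu_{\alpha,1}^{z_1}\cdots\mu_{\alpha,N}^{z_N}=0$" conflates the $k_i$-scalar structure with the $K$-coordinate structure: passing from the element $\lambda_1^{z_1}\cdots\lambda_N^{z_N}u_{\vec r}\in R_i$ to a system of $K$-equations in S-unit form requires a trace / Galois-conjugate computation, which you do not carry out. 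These points are what make Theorem~4.1 a real strengthening of Theorem~3.1 in Adamczewski--Bell rather than a formal corollary, and they are missing from your sketch.
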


For effectiveness reasons, we need to suppose $K$ to be an effectively computable field (field where all arithmetic operations are effective): this will be the case in all our applications.
We will not give the formal definition of a \emph{Zariski closed subset} because in our application of Theorem~\ref{thm:Sunitfield}, the set $V$ will be of the form
\begin{equation}\label{eq:linearV}
V = \{A \in \GL_D(K) \mid BAv = 0^d\}
\end{equation}
for some $d \in \N, B \in K^{d \times D}, v \in K^{D \times 1}$. This is a linear subset of $\GL_D(K)$ and hence Zariski closed. 
For a tuple of commutative matrices $\oA = (A_1, \ldots, A_n) \in \left(\GL_d(K)\right)^n$ and a vector $\bz = (z_1, \ldots, z_n) \in \Z^n$, denote
\[
\oA^{\bz} \coloneqq A_1^{z_1} A_2^{z_2} \cdots A_n^{z_n},
\]
similar to our notation for monomials.
In the case where $V$ has the form~\eqref{eq:linearV}, Theorem~\ref{thm:Sunitfield} gives the following corollary, which can be considered as a version of Theorem~\ref{thm:Sunit} where the module $M$ is replaced by a vector space over a field $K$.

\begin{cor}\label{cor:MordellLang}
    Let $K$ be a field of characteristic $p > 0$ and let $\oA = (A_1, \ldots, A_n) \in \left(\GL_d(K)\right)^n$ be a tuple of commuting matrices.
    Let $c_0, c_1, \ldots, c_m$ be elements in the $d$-dimensional vector space $K^d$.
    Then, the set of solutions $(\bz_1, \ldots, \bz_m) \in \left(\Z^{n}\right)^{m}$ for the equation
    \begin{equation}\label{eq:Sunitfield}
        \oA^{\bz_1} c_1 + \cdots + \oA^{\bz_m} c_m = c_0
    \end{equation}
    is an effectively $p$-automatic subset of $\left(\Z^{n}\right)^{m} \cong \Z^{nm}$.
\end{cor}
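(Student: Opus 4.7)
The plan is to reduce Corollary~\ref{cor:MordellLang} directly to Theorem~\ref{thm:Sunitfield} by packaging all $m$ matrix exponentials into a single commuting family of matrices acting on a bigger space, with one extra coordinate used to absorb the affine right-hand side $c_0$.

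Concretely, I would set $D \coloneqq dm + 1$ and, for each pair $(i,j) \in \{1, \ldots, m\} \times \{1, \ldots, n\}$, define $B_{i,j} \in \GL_D(K)$ to be the block-diagonal matrix that acts as $A_j$ on the $i$-th $d$-dimensional block, as the identity on every other $d$-dimensional block, and as $1$ on the last extra coordinate. Since the $A_j$ commute pairwise and matrices acting on disjoint blocks always commute, the family $\{B_{i,j}\}_{i,j}$ consists of $nm$ pairwise commuting elements of $\GL_D(K)$, fulfilling the hypothesis of Theorem~\ref{thm:Sunitfield}.

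Next, I would set $\bc \coloneqq (c_1, c_2, \ldots, c_m, 1)^{\top} \in K^D$ and, for $\bz = (\bz_1, \ldots, \bz_m) \in (\Z^n)^m$ with $\bz_i = (z_{i,1}, \ldots, z_{i,n})$, let $M_{\bz} \coloneqq \prod_{i,j} B_{i,j}^{z_{i,j}}$ (the order of the product is immaterial because the $B_{i,j}$ commute). A routine block computation gives
\[
M_{\bz} \cdot \bc \;=\; \bigl(\oA^{\bz_1} c_1, \; \oA^{\bz_2} c_2, \; \ldots, \; \oA^{\bz_m} c_m, \; 1\bigr)^{\top}.
\]
Letting $B \in K^{d \times D}$ be the linear map $(v_1, \ldots, v_m, \alpha) \mapsto v_1 + \cdots + v_m - \alpha c_0$, one checks that $B M_{\bz} \bc = 0^d$ holds if and only if $(\bz_1, \ldots, \bz_m)$ satisfies Equation~\eqref{eq:Sunitfield}.

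Finally, I would set $V \coloneqq \{M \in \GL_D(K) \mid B M \bc = 0^d\}$, which is a Zariski closed (indeed, linear) subset of $\GL_D(K)$ of exactly the form~\eqref{eq:linearV}. Applying Theorem~\ref{thm:Sunitfield} to the commuting family $\{B_{i,j}\}$ and this $V$ shows that the solution set of~\eqref{eq:Sunitfield}, viewed in $\Z^{nm}$ under the identification $(\Z^n)^m \cong \Z^{nm}$, is effectively $p$-automatic. No serious obstacle is anticipated: the entire argument is a bookkeeping reduction, and the only slightly delicate point is the extra coordinate, which is what lets us rewrite the affine condition ``$\cdots = c_0$'' as a \emph{linear} vanishing condition ``$B M \bc = 0$'' of the shape that Theorem~\ref{thm:Sunitfield} (via~\eqref{eq:linearV}) handles.
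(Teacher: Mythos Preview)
Your proposal is correct and follows essentially the same approach as the paper: build a commuting family of block-diagonal matrices indexed by $(i,j)$, record the $c_i$'s in a single vector, and rewrite Equation~\eqref{eq:Sunitfield} as a linear vanishing condition $BMv = 0$ of the form~\eqref{eq:linearV} so that Theorem~\ref{thm:Sunitfield} applies. The only cosmetic difference is how the affine term $c_0$ is homogenized: the paper adds an extra $d$-dimensional block carrying $c_0$ (so $D = (m+1)d$, $v = (c_1,\ldots,c_m,c_0)^{\top}$, $B = (I,\ldots,I,-I)$), whereas you add a single scalar coordinate carrying $1$ and fold $-c_0$ into the matrix $B$ (so $D = dm+1$); both are standard ways to linearize the same affine constraint.
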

\begin{proof}
    %Since $M \cong K^d$ and $X_1, \ldots, X_m \in K^*$, the automorphisms $X_i \colon M \rightarrow M, y \mapsto X_i y$ can be written as matrices $A_i \in \GL(M) = \GL_d(K)$.
    %Furthermore, $A_1, \ldots, A_m$ commute.
    For any $B_1, \ldots, B_{m+1} \in \GL_d(K)$, we denote by $\diag(B_1, \ldots, B_{m+1})$ the block diagonal matrix in $\GL_{(m+1)d}(K)$ whose block diagonal entries are $B_1, \ldots, B_{m+1}$.
    Let $I$ denote the identity matrix of dimension $d$ and denote $D = (m+1)d$.
    Consider the following elements in $\GL_{D}(K)$.
    For $j = 1, \ldots, n$, let $A_{1j} \coloneqq \diag(A_j, I, \ldots, I), A_{2j} = \diag(I, A_j, \ldots, I), \ldots, A_{mj} = \diag(I, \ldots, A_j, I)$.  
    
    Furthermore, let $v \coloneqq (c_1^{\top}, c_2^{\top}, \ldots, c_m^{\top}, c_0^{\top})^{\top} \in K^{D \times 1}$, and $B \coloneqq (I, I, \ldots, I, -I) \in K^{d \times D}$.
    Then, Equation~\eqref{eq:Sunitfield} is equivalent to
    \begin{equation}\label{eq:prodab}
    B\left( \prod_{i = 1}^m \prod_{j = 1}^n A_{ij}^{z_{ij}} \right)v = 0^d.
    \end{equation}
    We then apply Theorem~\ref{thm:Sunitfield} to the set
    \[
    V = \{A \in \GL_D(K) \mid BAv = 0^d\}
    \]
    and the commuting matrices $A_{ij}, i = 1, \ldots, m; j = 1, \ldots, n$.
    This shows that the set of solutions $(z_{ij})_{1 \leq i \leq m, 1 \leq j \leq n}$ for Equation~\eqref{eq:prodab} is an effectively $p$-automatic subset of $\Z^{nm}$.
\end{proof}

%In what follows we let $\K$ be an effectively computable field of characteristic $p > 0$ (such as $\F_p$ or its algebraic closure), let $R = \K[\oX^{\pm}]$, and let $M$ be a finitely presented $R$-module.
%Let $c_0, c_1, \ldots, c_d \in M$.
%To prove Theorem~\ref{thm:Sunit}, we want to show that the solution set $(\bz_1, \ldots, \bz_d) \in \left(\Z^{m}\right)^{d}$ for the equation
%    \begin{equation}\label{eq:Sunit}
%        \oX^{\bz_1} c_1 + \cdots + \oX^{\bz_d} c_d = c_0
%    \end{equation}
%is a $p$-automatic.
%
To pass from vector spaces to modules, we now introduce the \emph{primary decomposition} of a module.
First, we recall some standard definitions and tools from commutative algebra~\cite{eisenbud2013commutative}.

\begin{defn}\label{def:commalg}
    Let $R$ be a commutative Noetherian ring (for example, $R = \K[\oX^{\pm}]$ for some field $\K$).
    \begin{enumerate}[(1)]
        \item An \emph{ideal} of $R$ is an $R$-submodule of $R$. An ideal $I \subseteq R$ is called \emph{prime} if $I \neq R$, and for every $a, b \in R$, $ab \in I$ implies $a \in I$ or $b \in I$. Prime ideals are usually denoted by the Gothic letter $\frp$.
        \item Let $M$ be a finitely generated $R$-module. The \emph{annihilator} of an element $m \in M$, denoted by $\Ann_R(m)$, is the set $\{r \in R \mid r\cdot m = 0\}$.
        \item An $R$-module $M$ is called \emph{torsion-free} if for every $r \in R, m \in M$, $r \cdot m = 0$ implies $r = 0$ or $m = 0$. That is, $M$ is torsion-free if and only if $\Ann_R(m) = \{0\}$ for all $m \in M \setminus \{0\}$.
        \item A prime ideal $\frp \subset R$ is called \emph{associated} to $M$ if there exists a non-zero $m \in M$ such that $\frp = \Ann_R(m)$. %Any finitely generated $R$-module has a finite number of associated primes~\cite[Theorem~3.1]{eisenbud2013commutative}.
        \item Let $N$ be a finitely generated $R$-module. A submodule $N'$ of $N$ is called \emph{primary} if $N/N'$ has only one associated prime ideal. If we denote this prime ideal by $\frp$, then $N' \subseteq N$ is called \emph{$\frp$-primary}.
        \item Let $N'$ be a submodule of a finitely generated $R$-module $N$. The \emph{primary decomposition} of $N'$ is the writing of $N'$ as a finite intersection $\bigcap_{i = 1}^l N_i$, where $N_i$ is a $\frp_i$-primary submodule of $N$ for some prime ideal $\frp_i \subset R$.
        A primary decomposition always exists~\cite[Theorem~3.10]{eisenbud2013commutative}.
        If $R = \F_p[\oX^{\pm}]$ and $N, N'$ are finitely generated submodules of $R^d$ for some $d \in \N$, then a primary decomposition of $N' \subseteq N$ can be effectively computed~\cite{rutman1992grobner}.       
        \item A finitely generated $R$-module $M$ is called \emph{coprimary} if the submodule $\{0\}$ is primary, that is, if $M$ has only one associated prime ideal. If we denote this prime ideal by $\frp$, then $M$ is called \emph{$\frp$-coprimary}.
        If $M$ is $\frp$-coprimary, and $m$ is a non-zero element in $M$, then $\Ann_R(m) \subseteq \frp$.
    \end{enumerate}
\end{defn}

Let $M = \F_p[\oX^{\pm}]^d/N, \; N \subseteq \F_p[\oX^{\pm}]^d$ be the finite presentation of $M$ in Theorem~\ref{thm:Sunit}.
Let $N = \bigcap_{i = 1}^l N_i$ be the primary decomposition of $N$ as a submodule of $\F_p[\oX^{\pm}]^d$, where $N_i$ is $\frp_i$-primary for a prime ideal $\frp_i \subset \F_p[\oX^{\pm}]$, $i = 1, \ldots, l$.
Then $M_i \coloneqq \F_p[\oX^{\pm}]^d/N_i$ is $\frp_i$-coprimary.
The finite presentation of $N_i$, and hence $M_i$, can be effectively computed (Definition~\ref{def:commalg}(6)).
Since $N \subseteq N_i$, there is a canonical map $\rho_i \colon M = \F_p[\oX^{\pm}]^d/N \rightarrow M_i = \F_p[\oX^{\pm}]^d/N_i$.
Since $N = \bigcap_{i = 1}^l N_i$, the intersection of kernels $\bigcap_{i=1}^l \ker(\rho_i)$ is $\{0\}$.

%Since $\F_p \subseteq \K$ for any field $\K$ of characteristic $p$, we can consider the equation
%\begin{equation}\label{eq:Sunit}
%     \oX^{\bz_1} c_1 + \cdots + \oX^{\bz_d} c_d = c_0
%\end{equation}
%from Theorem~\ref{thm:Sunit} in any $\K[\oX^{\pm}]$-module $M$.

\begin{lem}\label{lem:inter}
    For $i = 1, \ldots, l$, let $\mZ_i$ denote the set of solutions $(\bz_1, \ldots, \bz_m) \in \left(\Z^n\right)^m$ of the following equation in $M_i$:
    \begin{equation}\label{eq:Suniti}
        \oX^{\bz_1} \cdot \rho_i(c_1) + \cdots + \oX^{\bz_m} \cdot \rho_i(c_m) = \rho_i(c_0).
    \end{equation}
    Then the solution set of Equation~\eqref{eq:Sunitori} is exactly the intersection $\bigcap_{i = 1}^l \mZ_i$.
\end{lem}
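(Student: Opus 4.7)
The proof is essentially an unpacking of the definition of primary decomposition, so I expect it to be short, with no real obstacle. The plan is to exploit two facts already noted in the excerpt: (i) each $\rho_i\colon M \to M_i$ is an $\F_p[\oX^{\pm}]$-module homomorphism, and (ii) the intersection of kernels $\bigcap_{i=1}^l \ker(\rho_i)$ equals $\{0\}$ because $N = \bigcap_{i=1}^l N_i$. Both containments of $\mZ = \bigcap_{i=1}^l \mZ_i$ follow immediately from these.

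For the inclusion $\mZ \subseteq \bigcap_{i=1}^l \mZ_i$, I would take an arbitrary solution $(\bz_1, \ldots, \bz_m)$ of Equation~\eqref{eq:Sunitori} in $M$ and apply $\rho_i$ to both sides. Since $\rho_i$ is $\F_p[\oX^{\pm}]$-linear, it commutes with the scalar multiplications $\oX^{\bz_j} \cdot (-)$ and with the additions, and it sends $c_j$ to $\rho_i(c_j)$. Therefore the image under $\rho_i$ of the left-hand side of~\eqref{eq:Sunitori} equals the left-hand side of~\eqref{eq:Suniti}, while the image of the right-hand side equals $\rho_i(c_0)$. Hence $(\bz_1, \ldots, \bz_m) \in \mZ_i$, and this holds for every $i$.

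For the reverse inclusion, suppose $(\bz_1, \ldots, \bz_m) \in \bigcap_{i=1}^l \mZ_i$. Set
\[
e \coloneqq \oX^{\bz_1} \cdot c_1 + \cdots + \oX^{\bz_m} \cdot c_m - c_0 \in M.
\]
The same $\F_p[\oX^{\pm}]$-linearity of $\rho_i$ shows that $\rho_i(e) = 0$ in $M_i$ for each $i$, i.e.\ $e \in \bigcap_{i=1}^l \ker(\rho_i)$. Since this intersection is $\{0\}$, we conclude $e = 0$, which is exactly Equation~\eqref{eq:Sunitori}. This finishes both directions, and hence the proof.
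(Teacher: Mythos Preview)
Your proof is correct and follows essentially the same approach as the paper: both arguments reduce Equation~\eqref{eq:Sunitori} to the vanishing of the element $e = \oX^{\bz_1} \cdot c_1 + \cdots + \oX^{\bz_m} \cdot c_m - c_0$ and then use $\bigcap_{i=1}^l \ker(\rho_i) = \{0\}$ together with the $\F_p[\oX^{\pm}]$-linearity of $\rho_i$ to conclude. The only cosmetic difference is that the paper phrases the two inclusions as a single ``if and only if'' chain, whereas you split them explicitly.
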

\begin{proof}
    Since $\bigcap_{i=1}^l \ker(\rho_i) = \{0\}$, an element $y \in M$ is zero if and only if $\rho_i(y) = 0$ for all $i = 1, \ldots, l$.
    Equation~\eqref{eq:Sunitori} is equivalent to 
    $
    \oX^{\bz_1} \cdot c_1 + \cdots + \oX^{\bz_m} \cdot c_m - c_0 = 0
    $.
    This holds if and only if 
    $
    \rho_i\left(\oX^{\bz_1} \cdot c_1 + \cdots + \oX^{\bz_m} \cdot c_m - c_0\right) = 0
    $
    for all $i = 1, \ldots, l$.
    Therefore, Equation~\eqref{eq:Sunitori} holds if and only if Equation~\eqref{eq:Suniti} holds for all $i$.
    Hence, the solution set of Equation~\eqref{eq:Sunitori} is exactly $\bigcap_{i = 1}^l \mZ_i$.
\end{proof}

\renewcommand\thmcontinues[1]{continued}
\begin{exmpl}[continues=exa:Sunit]
In Example~\ref{expl:decomp}, we took $R$ to be the ring $\F_2[X, Y]$, and $M$ to be the finitely presented $R$-module $\F_2[X, Y]/Y^2(X+Y+1)$. In this case, we have $M = \F_2[X, Y]/N$ with $N = \F_2[X, Y] \cdot Y^2(X+Y+1)$, whose primary decomposition is $N = N_1 \cap N_2, \; N_1 = \F_2[X, Y] \cdot Y^2, \, N_2 = \F_2[X, Y] \cdot (X+Y+1)$. The modules $M_1 = \F_2[X, Y]/N_1 = \F_2[X, Y]/Y^2$ and $M_2 = \F_2[X, Y]/N_2 = \F_2[X, Y]/(X+Y+1)$ are respectively $\frp_1$ and $\frp_2$-coprimary, where $\frp_1$ is the ideal of $R$ generated by $Y$, and $\frp_2$ is the ideal of $R$ generated by $X+Y+1$.
The sets $\mZ_1, \mZ_2$ are respectively solutions of the equation $X^a + Y^b = 1$ in $M_1$ and $M_2$.
    \hfill $\blacksquare$
\end{exmpl}

Our next step is to show that each $\mZ_i$ is effectively $p$-automatic.
If so, then their intersection $\bigcap_{i = 1}^l \mZ_i$ will also be $p$-automatic (see Lemma~\ref{lem:pauto}).
Fix $i \in \{1, \ldots, l\}$. Note that Equation~\eqref{eq:Suniti} is now an equation over the $\frp_i$-coprimary $\F_p[\oX^{\pm}]$-module $M_i$.
This motivates us to consider the quotient ring $\F_p[\oX^{\pm}]/\frp_i$.
We start with the following staple result in commutative algebra, which can be intuitively understood as performing a ``change of variable'' to simplify $\F_p[\oX^{\pm}]/\frp_i$.

\begin{lem}[{Noether Normalization Lemma~\cite[p.2]{mumford2004red}}]\label{lem:Noether}
    Let $\K$ be an infinite field and $A$ be a finitely generated $\K$-algebra. Then there exist algebraically independent elements $\widetilde{X}_1, \ldots, \widetilde{X}_s \in A$ such that $A$ is a finitely generated $\K[\widetilde{X}_1, \ldots, \widetilde{X}_s]$-module.
\end{lem}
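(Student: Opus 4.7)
The plan is to induct on the number $n$ of generators of $A$ as a $\K$-algebra, with base case $n = 0$ being trivial. Write $A = \K[x_1, \ldots, x_n]$. If $x_1, \ldots, x_n$ happen to be algebraically independent over $\K$, then we are done by taking $\widetilde{X}_i \coloneqq x_i$. Otherwise, there exists a non-zero polynomial $f \in \K[T_1, \ldots, T_n]$ with $f(x_1, \ldots, x_n) = 0$, and the key idea is to perform a linear change of coordinates $y_i \coloneqq x_i - \lambda_i x_n$ for $i = 1, \ldots, n-1$, with the scalars $\lambda_i \in \K$ chosen carefully so as to exhibit $x_n$ as integral over $\K[y_1, \ldots, y_{n-1}]$.

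To choose the $\lambda_i$, decompose $f = f_d + (\text{lower degree terms})$, where $f_d$ is the homogeneous component of top degree $d$. After substituting $x_i = y_i + \lambda_i x_n$ and collecting powers of $x_n$, the relation becomes a polynomial equation in $x_n$ with coefficients in $\K[y_1, \ldots, y_{n-1}]$ whose leading coefficient (in $x_n$) equals $f_d(\lambda_1, \ldots, \lambda_{n-1}, 1)$. The infinitude of $\K$ enters precisely here: since $f_d(T_1, \ldots, T_{n-1}, 1)$ is a non-zero polynomial in $n-1$ variables, one can choose $\lambda_1, \ldots, \lambda_{n-1} \in \K$ making $f_d(\lambda_1, \ldots, \lambda_{n-1}, 1) \in \K^*$. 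Rescaling then produces a monic polynomial relation for $x_n$ over $B \coloneqq \K[y_1, \ldots, y_{n-1}]$, so $x_n$ is integral over $B$, and $A = B[x_n]$ is a finitely generated $B$-module.

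To close the induction, observe that $B$ is a $\K$-algebra generated by the $n-1$ elements $y_1, \ldots, y_{n-1}$, so the inductive hypothesis applied to $B$ yields algebraically independent elements $\widetilde{X}_1, \ldots, \widetilde{X}_s \in B \subseteq A$ such that $B$ is a finitely generated $\K[\widetilde{X}_1, \ldots, \widetilde{X}_s]$-module. Transitivity of the ``finitely generated module'' property then gives that $A$ itself is a finitely generated $\K[\widetilde{X}_1, \ldots, \widetilde{X}_s]$-module, as required. The only genuinely delicate step is the choice of the $\lambda_i$: this argument collapses over a finite field, where one must instead use a non-linear substitution $y_i \coloneqq x_i - x_n^{r_i}$ with rapidly growing exponents $r_i$ to force, by degree considerations, that a single monomial dominates in the leading term. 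Since $\K$ here is assumed infinite, the cleaner linear substitution suffices.
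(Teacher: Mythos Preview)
Your proof is correct and is precisely the classical constructive argument from Mumford's \emph{Red Book} that the paper cites; the paper does not supply its own proof of this lemma but relies on this reference, noting only that the construction is effective (which your linear-substitution argument indeed is, as the $\lambda_i$ can be found by a finite search over $\K$).
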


Ideally we would want to apply the Noether Normalization Lemma to $A = \F_p[\oX^{\pm}]/\frp_i$. Even though $\F_p$ is not an infinite field, we can without loss of generality replace it with its algebraic closure $\K$ (which is infinite).
Formally, we replace $\frp_i$ with $\frp_i \otimes_{\F_p} \K$ and $M_i$ with $M_i \otimes_{\F_p} \K$.
Now $\frp_i$ and $M_i$ are respectively an ideal and a module over the ring $\K[\oX^{\pm}] = \F_p[\oX^{\pm}] \otimes_{\F_p} \K$. Note that this does not change the solution set $\mZ_i$.

The classical proof of Noether Normalization Lemma is constructive~\cite[p.2]{mumford2004red}.
In our context, this means that given $A = \K[\oX^{\pm}]/\frp_i$, the lemma explicitly gives the expressions for $\widetilde{X}_1, \ldots, \widetilde{X}_s \in A$ as elements in $\K[\oX^{\pm}]/\frp_i$.
%Furthermore, it also gives an explicit isomorphism $\phi \colon R/\frp_i \xrightarrow{\sim} \K[\widetilde{X}_1, \ldots, \widetilde{X}_s]^c/C$ for $c \geq 1$ and a finitely generated submodule $C$.

Choose $Y_1, \ldots, Y_s \in \K[\oX^{\pm}]$ such that $Y_i + \frp_i = \widetilde{X}_i$ for all $i$.
Since $\widetilde{X}_1, \ldots, \widetilde{X}_s$ are algebraically independent in $A = \K[\oX^{\pm}]/\frp_i$, we have $\frp_i \cap \K[Y_1, \ldots, Y_s] = \{0\}$.
Any finitely generated $\K[\oX^{\pm}]$-module $M$ is also a $\K[Y_1, \ldots, Y_s]$-module. However, $M$ might not be finitely generated as a $\K[Y_1, \ldots, Y_s]$-module.
However, in case $M$ is also finitely generated as a $\K[Y_1, \ldots, Y_s]$-module, then a finite presentation can be effectively computed.
This is summarized in the following rather standard result in effective commutative algebra.

%Note that any $R/\frp_i$-module is now also a $\K[\widetilde{X}_1, \ldots, \widetilde{X}_s]$-module.
%This next lemma shows that we can effectively convert a finite presentation of an $R/\frp_i$-module\footnote{A finitely presentation of an $R/\frp_i$-module $M$ is defined as the generators of $R/\frp_i$-modules $N \subseteq B \subseteq (R/\frp_i)^b, \, b \geq 1$, such that $M = N/B$.} to a finite presentation of a $\K[\widetilde{X}_1, \ldots, \widetilde{X}_s]$-module.

\begin{lem}[{\cite[Section~2]{baumslag1981computable}}]\label{lem:eff}
    Let $Y_1, \ldots, Y_s \in \K[\oX^{\pm}]$.
    Suppose we are given the finite presentation of an $\K[\oX^{\pm}]$-module $M$, as well as a finite number of generators $m_1, \ldots, m_t$ of $M$ as a $\K[Y_1, \ldots, Y_s]$-module.
    Then one can effectively compute the finite presentation of $M$ as a $\K[Y_1, \ldots, Y_s]$-module.
\end{lem}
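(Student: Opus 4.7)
The plan is to explicitly compute a finite set of generators for the kernel of the surjection $\psi \colon \K[Y_1, \ldots, Y_s]^t \twoheadrightarrow M$, $e_i \mapsto m_i$. By Noetherianity of $\K[Y_1, \ldots, Y_s]$ this kernel is finitely generated, but one must actually produce generators from the data at hand, namely a finite $\K[\oX^{\pm}]$-presentation of $M$ together with the list $m_1, \ldots, m_t$.

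First I would build a ``multiplication table'' describing the action of $\K[\oX^{\pm}]$ on the chosen $\K[Y_1, \ldots, Y_s]$-generators: for each $i \in \{1, \ldots, t\}$ and each $X_j^{\pm 1}$, effectively find polynomials $p^{(i,j,\pm)}_1, \ldots, p^{(i,j,\pm)}_t \in \K[Y_1, \ldots, Y_s]$ satisfying $X_j^{\pm 1} \cdot m_i = \sum_k p^{(i,j,\pm)}_k \cdot m_k$ in $M$. Such expressions exist by hypothesis; to compute one, I would enumerate candidate tuples in order of increasing total degree and test each via the effective submodule-membership algorithm for $\K[\oX^{\pm}]$-modules (Lemma~\ref{lem:classicdec}(i)). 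The search terminates because a valid expression is known to exist.

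Next, extend $\psi$ to the $\K[\oX^{\pm}]$-surjection $\Phi \colon \K[\oX^{\pm}]^t \twoheadrightarrow M$, $e_i \mapsto m_i$, and compute a finite set of generators $g_1, \ldots, g_r$ of $\ker(\Phi)$ as a $\K[\oX^{\pm}]$-module using standard Gr\"obner-basis syzygy techniques. The multiplication table lets us rewrite any product $\oX^{\ba} \cdot m_i$ as $\sum_k q^{(i, \ba)}_k \cdot m_k$ with $q^{(i,\ba)}_k \in \K[Y_1, \ldots, Y_s]$ by iterated substitution. Applying this rewriting monomial-by-monomial to each $g_\ell = (f_{\ell,1}(\oX), \ldots, f_{\ell,t}(\oX))$ yields an element $\tilde g_\ell \in \K[Y_1, \ldots, Y_s]^t$ with $\psi(\tilde g_\ell) = \Phi(g_\ell) = 0$, so $\tilde g_\ell \in \ker(\psi)$. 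I would also adjoin the explicit ``consistency'' relations in $\ker(\psi)$ obtained by equating the two rewritings of $X_j X_{j'} m_i$ (commutativity) and of $X_j X_j^{-1} m_i$ versus $m_i$ (inversion), for all relevant indices.

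The main obstacle is the correctness claim that the $\tilde g_\ell$'s together with these consistency relations generate all of $\ker(\psi)$ as a $\K[Y_1, \ldots, Y_s]$-module. To see this, any $v \in \ker(\psi) \subseteq \ker(\Phi)$ can be written $v = \sum_\ell h_\ell(\oX)\, g_\ell$ for some $h_\ell \in \K[\oX^{\pm}]$; applying the rewriting procedure to this identity converts the $\K[\oX^{\pm}]$-coefficients into $\K[Y_1, \ldots, Y_s]$-coefficients of the $\tilde g_\ell$'s, with any discrepancy arising from the order of rule application absorbed by the commutativity and inversion relations. Making this bookkeeping precise and verifying confluence of the rewriting system is the bulk of the technical work; the argument is standard in effective commutative algebra and can be found in~\cite{baumslag1981computable}.
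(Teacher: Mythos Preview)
Your proposal has a genuine gap: the generating set you describe for $\ker(\psi)$ is incomplete. Take the minimal instance $n = s = t = 1$, $M = \K[X_1^{\pm}]/(X_1-1) \cong \K$, $Y_1 = X_1$, $m_1 = 1$. The multiplication table reads $X_1^{\pm 1} \cdot m_1 = 1 \cdot m_1$, so your rewriting sends every monomial $X_1^a \cdot e_1$ to $e_1$; the single syzygy generator $g_1 = X_1 - 1 \in \ker(\Phi)$ rewrites to $\tilde g_1 = 1 - 1 = 0$, and all commutativity and inversion relations are trivially zero. But $\psi \colon \K[Y_1] \to \K$ sends $Y_1 \mapsto 1$, so $\ker(\psi) = (Y_1 - 1) \neq 0$, and your proposed generators miss it entirely. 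What is missing are the compatibility relations $R(Y_j \cdot e_i) - Y_j\, e_i$ (where on the left $Y_j$ is viewed as the given Laurent polynomial in $\K[\oX^{\pm}]$ and $R$ is your rewriting map), which record that the rewritten $\K[\oX^{\pm}]$-action of $Y_j$ agrees with formal multiplication by $Y_j$ on $\K[Y_1,\ldots,Y_s]^t$. Relatedly, the claim ``applying the rewriting procedure to this identity'' conflates expressions with elements: once you substitute $Y_j \mapsto f_j(\oX)$ to regard $v$ inside $\K[\oX^{\pm}]^t$, both sides of $v = \sum_\ell h_\ell g_\ell$ are literally the same element, so rewriting cannot recover $v$ on the left and something else on the right.

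The paper's proof avoids the rewriting system altogether and is considerably shorter. It adjoins $Y_1,\ldots,Y_s$ as \emph{fresh} formal variables, presents $M$ over the larger ring $\K[\oX^{\pm}, Y_1,\ldots,Y_s]$ by adding the relations $Y_j - f_j$ (precisely the compatibility relations your approach lacks), and then invokes directly the cited black-box of Baumslag--Cannonito--Miller (essentially Lemma~\ref{lem:sgmod}) to compute a finite $\K[Y_1,\ldots,Y_s]$-presentation of the submodule of this finitely presented $\K[\oX^{\pm}, Y_1,\ldots,Y_s]$-module generated by $m_1,\ldots,m_t$.
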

\begin{proof}
    Let $M = \K[\oX^{\pm}]^d/N$ be its finite presentation as a $\K[\oX^{\pm}]$-module.
    Let $f_1, \ldots f_s$ be polynomials in $\K[\oX^{\pm}]$ that correspond to the elements $Y_1, \ldots Y_s$.
    Then $M$ is also an effectively finitely presented module over the polynomial ring $\K[\oX^{\pm}, Y_1, \ldots, Y_s]$ by 
    $M \cong \K[\oX^{\pm}, Y_1, \ldots, Y_s]^d/\big(\K[\oX^{\pm}, Y_1, \ldots, Y_s] \cdot N + \sum_{i = 1}^s (Y_i - f_i) \cdot \K[\oX^{\pm}, Y_1, \ldots, Y_s]^d \big)$.
    Then, we can compute the finite presentation of the $\K[Y_1, \ldots, Y_s]$-submodule of $M$ generated by $m_1, \ldots, m_t$ (see~\cite[Corollary~2.5]{baumslag1981computable}).
    This submodule is exactly $M$.
\end{proof}

\renewcommand\thmcontinues[1]{continued}
\begin{exmpl}[continues=exa:Sunit]
Consider the ideal $\frp_1$ of $\K[X, Y]$ generated by $Y$.
Let $A$ be the ring $\K[X, Y]/\frp_1 = \K[X, Y]/Y$.
Then in the Noether Normalization Lemma we can let $s = 1$, and $\widetilde{X}_1 = X + \frp_1$. So $\K[X, Y]/Y \cong \K[\widetilde{X}_1]$ is a $\K[\widetilde{X}_1]$-module.
Let $Y_1 = X_1$, then the $\K[X, Y]$-module $M_1 = \K[X, Y]/Y^2$ is a finitely presented $\K[Y_1]$-module $\K[Y_1]^2$.
%
%Note that in general, it might not be possible to simply choose $Y_1, \ldots, Y_s$ to be a subset of the variables $X_1, \ldots, X_n$. For example, if $A = \K[X, Y]/(XY+1)$, then we will need to take $s = 1$, $Y_1 = X+Y$.
    \hfill $\blacksquare$
\end{exmpl}

\begin{lem}\label{lem:primary}
    The set $\mZ_i$ is effectively $p$-automatic.
\end{lem}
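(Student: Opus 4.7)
The plan is to reduce the equation in the $\frp_i$-coprimary module $M_i$ to a linear algebra equation over a field, and then invoke Corollary~\ref{cor:MordellLang}. First I would extend scalars from $\F_p$ to an infinite field $\K$ (for instance its algebraic closure, performing primary decomposition again after extension if the extended ideal is no longer prime), which does not affect the solution set $\mZ_i$. Then I would apply Noether Normalization (Lemma~\ref{lem:Noether}) to $A = \K[\oX^{\pm}]/\frp_i$, obtaining algebraically independent $\widetilde X_1, \ldots, \widetilde X_s$ such that $A$ is a finitely generated $\K[\widetilde X_1, \ldots, \widetilde X_s]$-module, and lift these to $Y_1, \ldots, Y_s \in \K[\oX^{\pm}]$.

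Since $M_i$ is $\frp_i$-coprimary and finitely generated, some power $\frp_i^k$ annihilates $M_i$, so $M_i$ is a module over $R := \K[\oX^{\pm}]/\frp_i^k$. A standard filtration argument on $R$ via the successive quotients $\frp_i^j/\frp_i^{j+1}$, each of which is a finitely generated $A$-module and therefore finitely generated over $\K[Y_1, \ldots, Y_s]$, shows that $R$, and hence $M_i$, is finitely generated as a $\K[Y_1, \ldots, Y_s]$-module. By Lemma~\ref{lem:eff}, a finite presentation of $M_i$ as a $\K[Y_1, \ldots, Y_s]$-module can then be computed effectively. Furthermore, $M_i$ is torsion-free as a $\K[Y_1, \ldots, Y_s]$-module: for any nonzero $m \in M_i$, Definition~\ref{def:commalg}(7) yields $\Ann_{\K[\oX^{\pm}]}(m) \subseteq \frp_i$, whose intersection with $\K[Y_1, \ldots, Y_s]$ vanishes because the $Y_j$'s are algebraically independent modulo $\frp_i$.

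Torsion-freeness lets me embed $M_i$ into the finite-dimensional $K$-vector space $V := M_i \otimes_{\K[Y_1, \ldots, Y_s]} K$, where $K = \K(Y_1, \ldots, Y_s)$. Multiplication by each $X_j$ on $M_i$ extends to a $K$-linear endomorphism $A_j$ of $V$; each $A_j$ is invertible since $X_j^{-1}$ also acts on $M_i$, and the $A_j$'s commute because $\K[\oX^{\pm}]$ is commutative. Equation~\eqref{eq:Suniti} then becomes the equation $\oA^{\bz_1} v_1 + \cdots + \oA^{\bz_m} v_m = v_0$ over $K$, where $v_j$ is the image of $\rho_i(c_j)$ in $V$; since $M_i \hookrightarrow V$, this has the same solution set $\mZ_i$. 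Applying Corollary~\ref{cor:MordellLang} to the commuting tuple $\oA = (A_1, \ldots, A_n)$ and vectors $v_0, v_1, \ldots, v_m$ yields the required effectively $p$-automatic description of $\mZ_i$.

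The main obstacles are preserving effectiveness at every step (the constructive form of Noether Normalization, the filtration computation, and writing down the matrices $A_j$ from the presentation given by Lemma~\ref{lem:eff}) and justifying torsion-freeness, which is precisely where the coprimarity hypothesis enters. Once these are in place, Corollary~\ref{cor:MordellLang} closes out the proof.
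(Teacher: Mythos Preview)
Your proposal is correct and follows essentially the same route as the paper: extend scalars to an algebraically closed $\K$, apply Noether normalization to $\K[\oX^{\pm}]/\frp_i$, use a $\frp_i$-adic filtration to see that $M_i$ is finitely generated over $\K[Y_1,\ldots,Y_s]$ (the paper filters $M_i$ directly via $\frp_i^j M_i$ rather than the ring $R$, but this is immaterial), invoke coprimarity to get torsion-freeness, embed into the localization $K^d$, and apply Corollary~\ref{cor:MordellLang}. Your parenthetical about redoing the primary decomposition after base change is a reasonable precaution that the paper glosses over.
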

\begin{proof}
	The main line of our proof follows~\cite[Lemma~9.7-9.8]{derksen2007skolem}, with additional remarks given on effectiveness. 
	As above, we can replace the base field $\F_p$ by its algebraic closure $\K$. %By the Noether Normalization Lemma, we construct algebraically independent $\widetilde{X}_1, \ldots, \widetilde{X}_s \in R/\frp_i$ such that $R/\frp_i$ is a finitely generated $\K[\widetilde{X}_1, \ldots, \widetilde{X}_s]$-module.

    Since $M_i$ is $\frp_i$-coprimary, we can compute $t \in \N$ such that $\frp_i^t M_i = 0$ (such $t$ exists by the ascending chain condition~\cite[Chapter~1.4]{eisenbud2013commutative}).
    We have
    $
    M_i \supseteq \frp_i M_i \supseteq \frp_i^2 M_i \supseteq \cdots \supseteq \frp_i^t M_i = 0
    $.
    Because $\frp_i \cdot (\frp_i^{j} M_i/\frp_i^{j+1} M_i) = 0$, each quotient $\frp_i^{j} M_i/\frp_i^{j+1} M_i$ is a finitely generated $\K[\oX^{\pm}]/\frp_i$-module with effectively computable generators, and therefore a finitely generated $\K[Y_1, \ldots, Y_s]$-module with effectively computable generators.
    It follows that the $\K[\oX^{\pm}]$-module $M_i$ is also finitely generated as a $\K[Y_1, \ldots, Y_s]$-module, with effectively computable generators.
    Since the finite presentation of $M_i$ as a $\K[\oX^{\pm}]$-module is given, we can compute the finite presentation of $M_i$ as a $\K[Y_1, \ldots, Y_s]$-module by Lemma~\ref{lem:eff}.

    Since $M_i$ is $\frp_i$-coprimary, the annihilator of any non-zero element in $M_i$ as a $\K[Y_1, \ldots, Y_s]$-module is contained in $\frp_i \cap \K[Y_1, \ldots, Y_s] = \{0\}$ (see Definition~\ref{def:commalg}(7)). Therefore, $M_i$ is a finitely generated torsion-free $\K[Y_1, \ldots, Y_s]$-module (see Definition~\ref{def:commalg}(3)). 
    
    Consider the quotient field $K \coloneqq \K(Y_1, \ldots, Y_s)$ of $\K[Y_1, \ldots, Y_s]$. The localization $\widetilde{M_i} \coloneqq M_i \otimes_{\K[Y_1, \ldots, Y_s]} K$ is a finite dimensional $K$-linear space, so $\widetilde{M_i} \cong K^d$ for some integer $d$.
    Because $M_i$ is torsion-free, the canonical map $\varphi \colon M_i \rightarrow M_i \otimes_{\K[Y_1, \ldots, Y_s]} K$ is injective.
    The linear transformations $A_j \colon \widetilde{M_i} \rightarrow \widetilde{M_i}, y \mapsto X_j \cdot y,$ commute for $j = 1, \ldots, n$, and they are respectively inverses of the transformations $A_j^{-1} \colon \widetilde{M_i} \rightarrow \widetilde{M_i}, y \mapsto X_j^{-1} \cdot y$. Hence, $A_j$ is in $\GL_d(K)$ for $j = 1, \ldots, n$, for a fixed basis of $\widetilde{M_i} \cong K^d$, and these matrices commute.

    Since $\varphi$ is injective, Equation~\eqref{eq:Suniti} holds if and only if
    \begin{equation}\label{eq:Sunitlocal}
    \oA^{\bz_1} \cdot \varphi(\rho_i(c_1)) + \cdots + \oA^{\bz_m} \cdot \varphi(\rho_i(c_m)) = \varphi(\rho_i(c_0)),
    \end{equation}
    where $\oA = (A_1, \ldots, A_n)$.
    So $\mZ_i$ is the solution set for Equation~\eqref{eq:Sunitlocal}.
    Note that $K = \K(Y_1, \ldots, Y_s)$ is an effective field. We now apply Corollary~\ref{cor:MordellLang}, and conclude that the solution set for Equation~\eqref{eq:Sunitlocal} is an effectively $p$-automatic subset of $\left(\Z^{n}\right)^{m} \cong \Z^{nm}$. 
\end{proof}

\renewcommand\thmcontinues[1]{continued}
\begin{exmpl}[continues=exa:Sunit]
As above, the $\K[X, Y]$-module $M_1 = \K[X, Y]/Y^2$ is isomorphic to the $\K[X]$-module $\K[X]^2$ by $\K[X, Y]/Y^2 \xrightarrow[]{\sim} \K[X]^2, \, f_0 + Y f_1 + Y^2 f_2 + \cdots \mapsto (f_0, f_1)$, for $f_0, f_1, f_2, \ldots \in \K[X]$.

We would like to solve the equation $X^a + Y^b = 1$ in $M_1$.
Let $\K(X)$ denote the quotient field of $\K[X]$: this is the set of univariate rational functions over the field $\K$.
Note that $\varphi: M_1 \cong \K[X]^2 \hookrightarrow \K(X)^2$ is injective.
Consider $\K(X)^2$ as a 2-dimensional vector space over $\K(X)$, then the map $\varphi(m) \mapsto \varphi(X \cdot m), m \in M_1,$ extends to a linear transformation associated to the matrix 
$
A_X =
\begin{pmatrix}
    X & 0 \\
    0 & X \\
\end{pmatrix}
$,
and the map $\varphi(m) \mapsto \varphi(Y \cdot m), m \in M_1$ extends to a linear transformation associated to the matrix 
$
A_Y =
\begin{pmatrix}
    0 & 1 \\
    0 & 0 \\
\end{pmatrix}
$.
(Here, $A_Y$ is not invertible because the variable $Y$ is not invertible in $\K[X, Y]$. However in the proof of Lemma~\ref{lem:primary}, we are considering Laurent polynomial rings, so all the associated matrices would be invertible.)

As a result, the equation $X^a + Y^b = 1$ in $M_1$ is equivalent to the equation
$
A_X^a \cdot (1, 0)^{\top} + A_Y^b (1, 0)^{\top} = (1, 0)^{\top}
$
in the 2-dimensional vector space $\K(X)^2$.
Subject to the invertibility of $A_Y$, we can then use Corollary~\ref{cor:MordellLang} to conclude that the equation admits an effectively $2$-automatic solution set.
\hfill $\blacksquare$
\end{exmpl}

Combining Lemma~\ref{lem:inter} and Lemma~\ref{lem:primary}, we obtain a proof of Theorem~\ref{thm:Sunit}:
\begin{proof}[Proof of Theorem~\ref{thm:Sunit}]
    By Lemma~\ref{lem:inter}, the solution set of Equation~\eqref{eq:Sunitori} is the intersection $\bigcap_{i = 1}^l \mZ_i$.
    Lemma~\ref{lem:primary} shows that each $\mZ_i$ is effectively $p$-automatic, so their intersection is also effectively $p$-automatic (Lemma~\ref{lem:pauto}).
\end{proof}

\bibliography{lamplighter}

\end{document}